\renewcommand\footnotetextcopyrightpermission[1]{} 
\newtheorem{problem}{Problem}
\newtheorem{definition}{Definition}
\newtheorem{theorem}{Theorem}
\newcommand{\method}{\textsc{Zoom-SVD}\xspace}
\newcommand{\incrementalsvd}{\textsc{Incremental-SVD}\xspace}
\newcommand{\partialsvd}{\textsc{Partial-SVD}\xspace}
\newcommand{\stitchedsvd}{\textsc{Stitched-SVD}\xspace}
\newcommand{\hide}[1]{}
\newcommand*{\QEDB}{\hfill\ensuremath{\Box}}%
\newcommand{\mat}[1]{\mathbf{#1}}
\newcommand{\matt}[1]{\mathbf{#1}^{\text{T}}}
\newcommand{\vect}[1]{\mathbf{#1}}
\newcommand{\Abmat}[1]{\mat{A}^{(#1)}}
\newcommand{\Umat}[1]{\mat{U}_{(#1)}}
\newcommand{\Smat}[1]{\mat{\Sigma}_{(#1)}}
\newcommand{\Vmat}[1]{\mat{V}_{(#1)}}
\newcommand{\Vmatt}[1]{\matt{V}_{(#1)}}
\newcommand{\Vmattt}[1]{\mat{V'}^{\text{T}}_{(#1)}}
\newcommand{\Zmat}[0]{\mat{O}}
\newcommand{\Imat}[0]{\mat{I}}
\newcommand{\Uset}{\mathcal{U}}
\newcommand{\Sset}{\mathcal{S}}
\newcommand{\Vset}{\mathcal{V}}
\newcommand{\subfloat}{\subfigure}
\begin{document}
\title{Zoom-SVD: Fast and Memory Efficient Method for\\
Extracting Key Patterns in an Arbitrary Time Range
}


\author{Jun-Gi Jang}
\affiliation{%
  \institution{Seoul National University}
}
\email{elnino4@snu.ac.kr}

\author{Dongjin Choi}
\affiliation{%
  \institution{Seoul National University}
}
\email{skywalker5@snu.ac.kr}

\author{Jinhong Jung}
\affiliation{%
  \institution{Seoul National University}
}
\email{jinhongjung@snu.ac.kr}

\author{U Kang}
\affiliation{%
  \institution{Seoul National University}
}
\email{ukang@snu.ac.kr}

\begin{abstract}
	Given multiple time series data, how can we efficiently find latent patterns in an arbitrary time range?
Singular value decomposition (SVD) is a crucial tool to discover hidden factors in multiple time series data, and has been used in many data mining applications including dimensionality reduction, principal component analysis, recommender systems, etc.
Along with its static version, incremental SVD has been used to deal with multiple semi-infinite time series data and to identify patterns of the data.
However, existing SVD methods for the multiple time series data analysis do not provide functionality for detecting patterns of data in an arbitrary time range: standard SVD requires data for all intervals corresponding to a time range query, and incremental SVD does not consider an arbitrary time range.

In this paper, we propose \method, a fast and memory efficient method for finding latent factors of time series data in an arbitrary time range.
\method incrementally compresses multiple time series data block by block to reduce the space cost in storage phase,
and efficiently computes singular value decomposition (SVD) for a given time range query in query phase by carefully stitching stored SVD results.
Through extensive experiments, we demonstrate that \method is up to $15 \times$ faster, and requires $15 \times$ less space than existing methods.
Our case study shows that \method is useful for capturing past time ranges whose patterns are similar to a query time range.

%
\end{abstract}


\maketitle

\section{Introduction}
\label{sec:intro}

Given multiple time series data (e.g., measurements from multiple sensors) and a time range (e.g., 1:00 am - 3:00 am yesterday), how can we efficiently discover latent factors of the time series in the range?
Revealing hidden factors in time series is important for analysis of patterns and tendencies encoded in the time series data. 
Singular value decomposition (SVD) effectively finds hidden factors in data, and has been extensively utilized in many data mining applications such as dimensionality reduction~\cite{ravi1998dimensionality}, principal component analysis (PCA)~\cite{jolliffe2002principal,wall2003singular}, data clustering~\cite{simek2004using,osinski2004lingo}, tensor analysis~\cite{Sael201582,JeonPKF15,journals/vldb/JeonPFSK16,conf/icde/JeonJSK16,conf/cikm/ParkJLK16,conf/icde/OhPSK18}, graph mining~\cite{KangTS12,tong2006fast,KangMF11,KangMF14} and recommender systems~\cite{koren2009matrix, conf/bigdataconf/ParkJK17}.
SVD has been also successfully applied to stream mining tasks~\cite{wall2003singular,spiegel2011pattern} in order to analyze time series data.
\begin{figure*}
	\vspace{-4mm}
	 \subfloat{\includegraphics[width=0.80\textwidth]{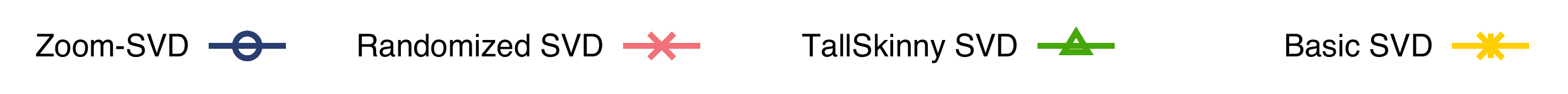}}\vspace{-6mm} \\
	\subfloat[Query time of the Activity dataset]{\includegraphics[width=0.3\textwidth]{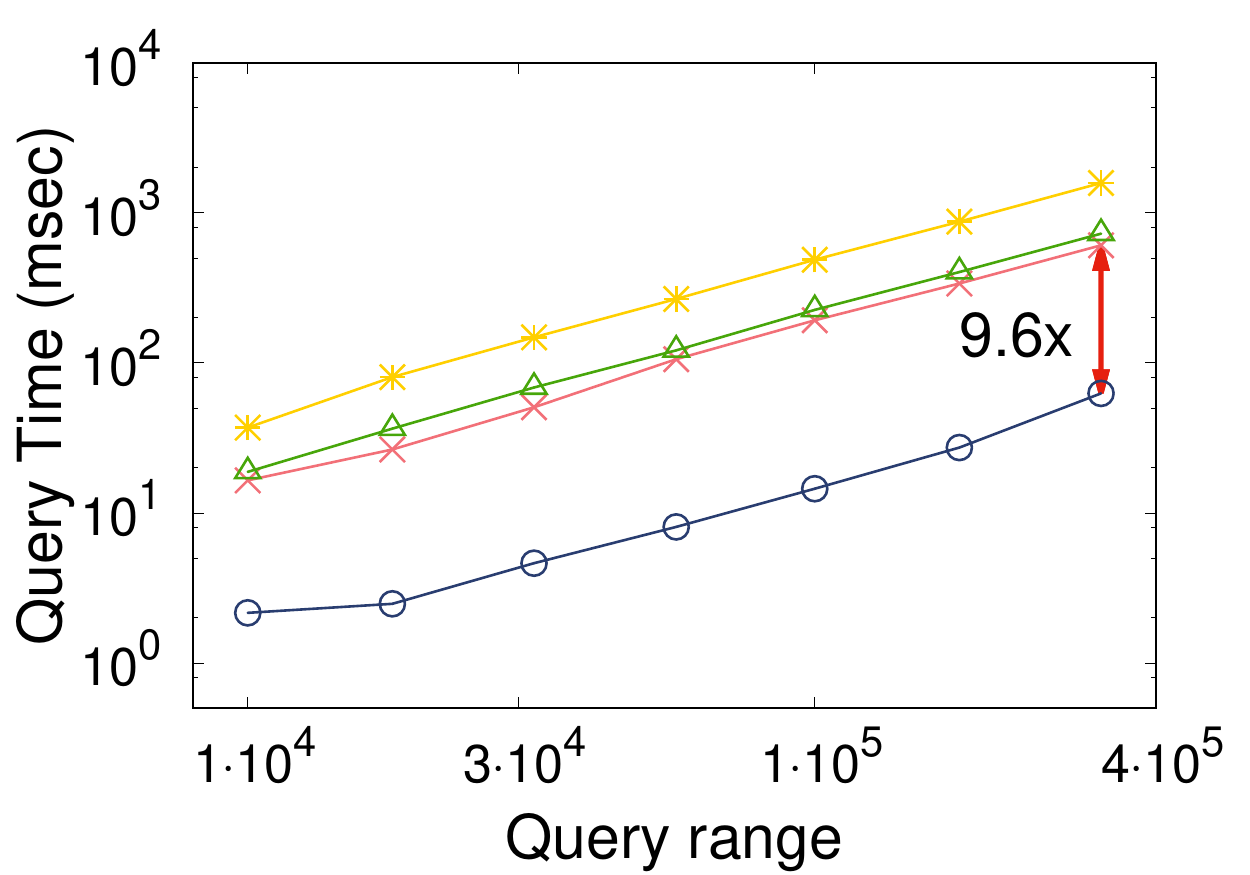}\label{fig:Query_Activity}}
	\subfloat[Query time of the Gas dataset]{\includegraphics[width=0.3\textwidth]{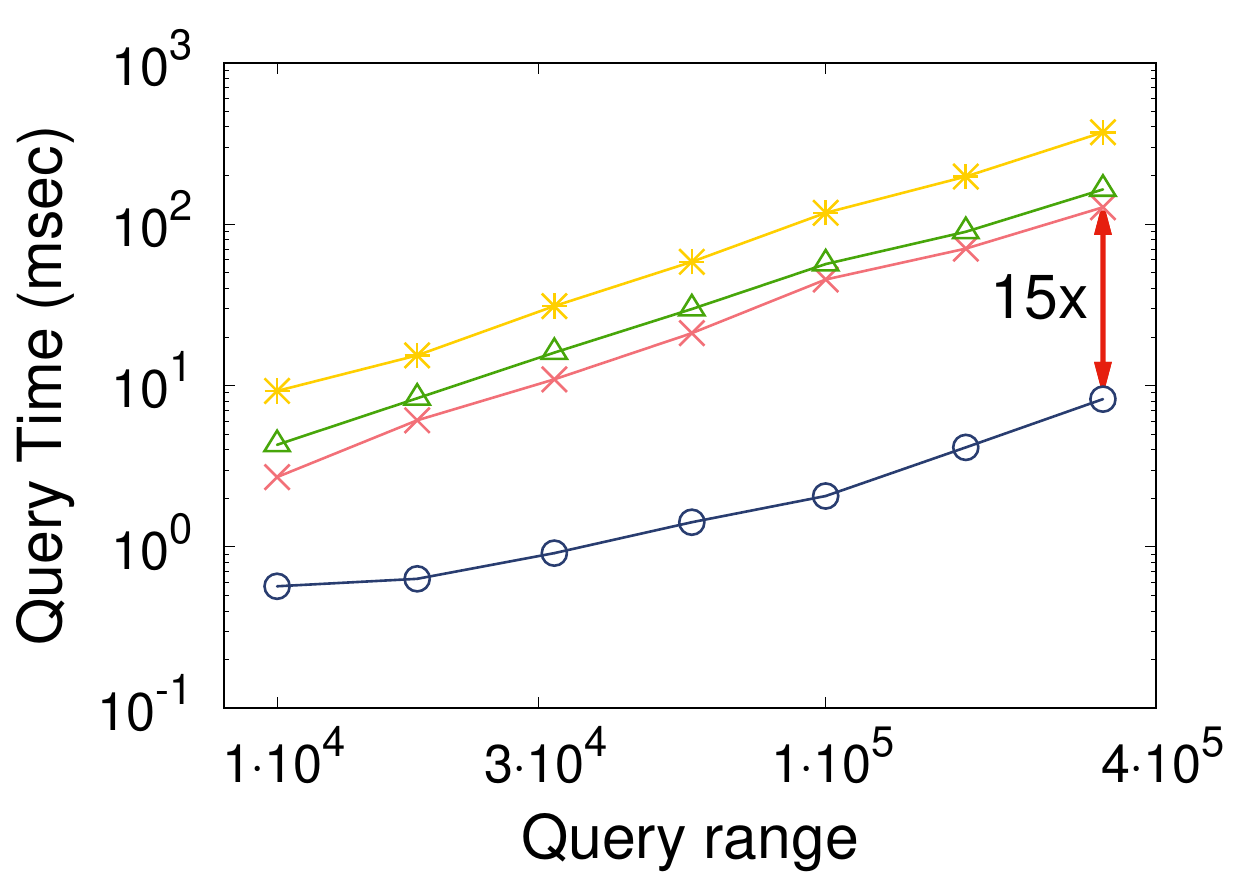}\label{fig:Query_Gas}}
	\subfloat[Query time of the London dataset]{\includegraphics[width=0.3\textwidth]{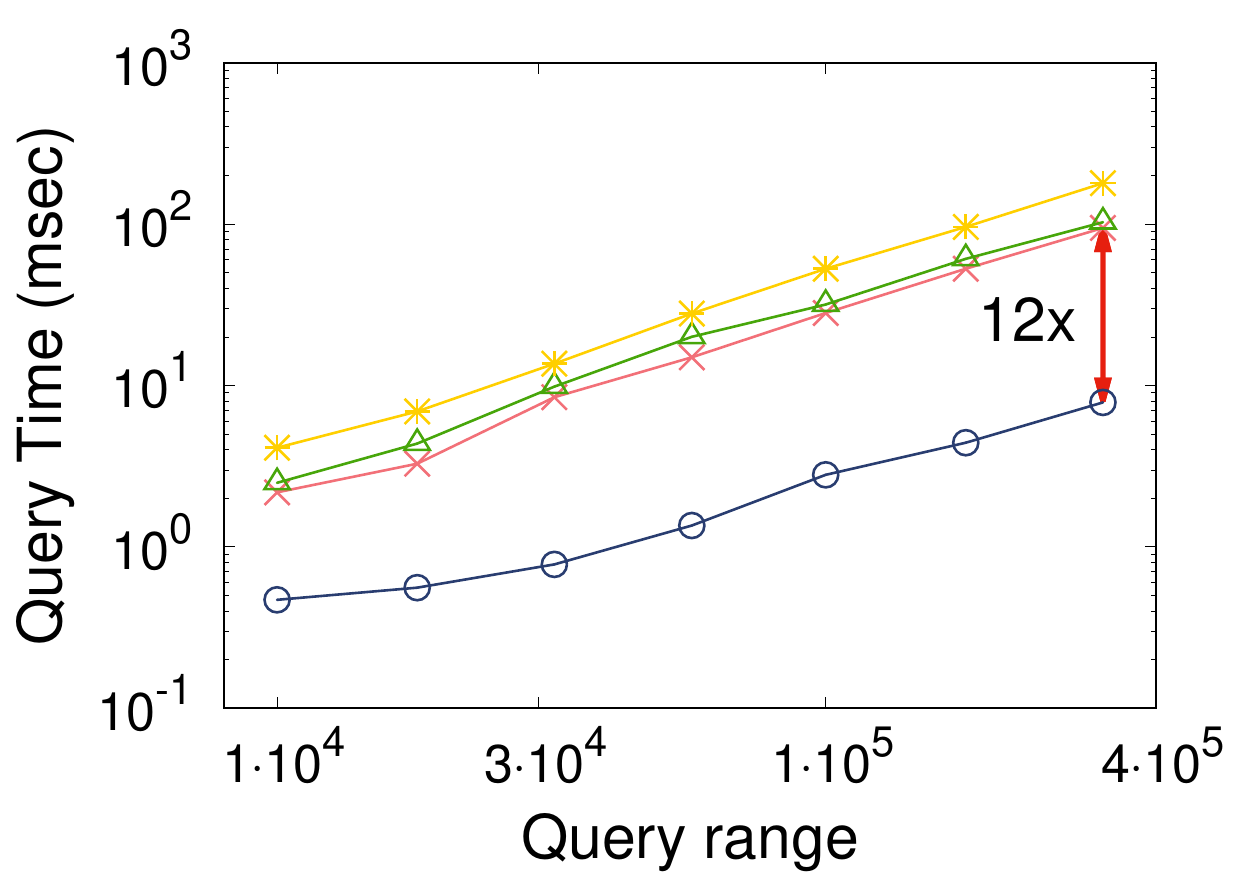}\label{fig:Query_London}} \\
	\caption{
		Query time of \method compared to other SVD methods.
		The starting point $t_s$ and the ending point $t_e$ are arbitrarily chosen, and we increase time range $t_{e} - t_{s}$ from $10^4$ to $3.2 \times 10^5$. 
		(a) The query time of \method is up to $9.6 \times$ faster than that of the second best method in Activity dataset.
		(b) \method is up to $15 \times$ faster than the second best method in the query phase of Gas dataset.
		(c) \method is up to $12 \times$ faster than the second best method in the query phase of London dataset.
	}
	\label{fig:Runtime}
\end{figure*}
However, methods based on standard SVD~\cite{brand2003fast,ross2008incremental,DBLP:conf/kdd/ZadehMUYPVSSZ16,DBLP:journals/siamrev/HalkoMT11} are not suitable for finding latent factors in an arbitrary time range since the methods have an expensive computational cost, and they have to store all the raw data.
This limitation makes it difficult to investigate patterns of a time range in stream environment even if it is important to analyze a specific past event or find recurring patterns in time series~\cite{DBLP:conf/sigmod/PapadimitriouY06}.
A naive approach for a time range query on time series is to store all of the arrived data and apply SVD to the data, but this approach is inefficient since it requires huge storage space, and the computational cost of SVD for a long time range query is expensive.


In this paper, we propose \method (Zoomable SVD), an efficient method for revealing hidden factors of multiple time series in an arbitrary time range.
With \method, users can zoom-in to find patterns in a specific time range of interest, or zoom-out to extract patterns in a wider time range.
\method comprises two phases: storage phase and query phase.
\method considers multiple time series as a set of blocks of a fixed length.
In the storage phase, \method carefully compresses each block using SVD and low-rank approximation to reduce storage cost and incrementally updates the most recent block of a newly arrived data.
In the query phase, \method efficiently computes the SVD results in a given time range based on the compressed blocks.
Through extensive experiments with real-world multiple time series data, we demonstrate the effectiveness and the efficiency of \method compared to other methods as shown in Figure \ref{fig:Runtime}.
The main contributions of this paper are summarized as follows:

\begin{itemize}
	\item {\textbf{Algorithm.} We propose \method, an efficient method for extracting key patterns from multiple time series data in an arbitrary time range. }
	\item {\textbf{Analysis.} We theoretically analyze the time and the space complexities of our proposed method \method.}
	\item {\textbf{Experiment.} We present experimental results showing that \method computes time range queries up to $15 \times$ faster, and requires up to $15 \times$ less space than other methods.
	    We also confirm that our proposed method \method provides the best trade-off between efficiency and accuracy.
	}
\end{itemize}


The codes and datasets for this paper are available at \url{http://datalab.snu.ac.kr/zoomsvd}.
In the rest of this paper,
we describe the preliminaries and formally define the problem in Section~\ref{sec:prelim}, propose our method \method in Section~\ref{sec:method},
present experimental results in Section~\ref{sec:experiment},
demonstrate the case study in Section~\ref{sec:casestudy},
discuss related works in Section~\ref{sec:related},
and conclude in Section \ref{sec:conclusion}.

\begin{table} [t]
	\centering
	\caption{Symbol description.}
	\label{tab:notation}
	\resizebox{\columnwidth}{!}{%
		\begin{tabular}{cl}
			\toprule
			\textbf{Symbol} & \textbf{Description} \\
			\midrule
			$b$  & Initial block size  \\
			$\xi$ & Threshold for low-rank approximation \\
			$k$ &  Number of singular values \\
			$k_{(i)}$ & Number of singular values in $i$-th block \\
			$\begin{bmatrix} \mathbf{X} ~;~ \mathbf{Y} \end{bmatrix}$ & Vertical concatenation of two matrices  $\mathbf{X}$ and $\mathbf{Y}$\\
			$\mathbf{A}$ & Raw multiple time series data \\
			$\Abmat{i}$ & i-th block of $\mathbf{A}$\\
			$\Umat{i}$ & Left singular vector matrix of $\Abmat{i}$ \\
			$\Smat{i}$ & Singular value matrix of $\Abmat{i}$ \\
			$\Vmat{i}$ & Right singular vector matrix of $\Abmat{i}$ \\
			$\Umat{S:E}$ & Left singular vector matrix computed in query phase \\
			$\Smat{S:E}$ & Singular value matrix computed in query phase \\
			$\Vmat{S:E}$ & Right singular vector matrix computed in query phase \\
			$\Uset$ & Set of left singular vector matrix $\Umat{i}$\\
			$\Sset$ & Set of singular value matrix $\Smat{i}$\\
			$\Vset$ & Set of right singular vector matrix $\Vmat{i}$\\
			$[t_s, t_e]$ & Time range query\\
			$t_s$ & Starting point of time range query \\
			$t_e$ & Ending point of time range query\\
			$S$ & Index of block matrix corresponding to $t_s$\\
			$E$ & Index of block matrix corresponding to $t_e$\\
			\bottomrule
		\end{tabular}
	}
\end{table}

\section{Preliminaries}
\label{sec:prelim}

We describe preliminaries on singular value decomposition (SVD) and incremental SVD (Sections~\ref{subsec:SVD} and \ref{subsec:IncreSVD}).
We then define the problem handled in this paper (Section~\ref{subsec:problem_definition}).
Table~\ref{tab:notation} lists the symbols used in this paper.

\subsection{Singular Value Decomposition (SVD)}
\label{subsec:SVD}
SVD is a decomposition method for finding latent factors in a matrix $\mathbf{A} \in \mathbb{R}^{t \times c}$. 
Suppose the rank of the matrix $\mathbf{A}$ is $r$.
Then, SVD of $\mathbf{A}$ is represented as $\mathbf{A = U{\Sigma}V^T}$ where
$\mathbf{\Sigma}$ is an $r \times r$ diagonal matrix whose diagonal entries are singular values.
The $i$-th singular value $\sigma_i$ is located in $\Sigma_{i,i}$ where $\sigma_1$ $\geq$ $\sigma_2$ $\geq$ $\cdots$ $\geq$ $\sigma_{r}$ $\geq$ $0$.
$\mathbf{U} \in \mathbb{R}^{t \times r}$ is called the left singular vector matrix (or a set of left singular vectors) of $\mathbf{A}$; $\mathbf{U}=\begin{bmatrix} \mathbf{u}_1 \cdots \mathbf{u}_r \end{bmatrix}$ is a column orthogonal matrix where $\mathbf{u}_1$, $\cdots$, $\mathbf{u}_r$ are the eigenvectors of $\mathbf{A}\mathbf{A}^T$.
$\mathbf{V} \in \mathbb{R}^{c \times r}$ is the right singular vector matrix of $\mathbf{A}$; $\mathbf{V}=\begin{bmatrix} \mathbf{v}_1 \cdots \mathbf{v}_r \end{bmatrix}$ is a column orthogonal matrix where $\mathbf{v}_1$, $\cdots$, $\mathbf{v}_r$ are the eigenvectors of $\mathbf{A}^T\mathbf{A}$.
Note that the singular vectors in $\mathbf{U}$ and $\mathbf{V}$ are used as hidden factors to analyze the data matrix $\mathbf{A}$.

\textbf{Low-rank approximation.}
Low-rank approximation effectively approximates the original data matrix based on SVD.
The key idea of the low-rank approximation is to keep top-$k$ highest singular values and corresponding singular vectors where $k$ is a number smaller than the rank $r$ of the original matrix.
The low-rank approximation of $\mathbf{A}$ is represented as follows:
\begin{equation*}
	\mathbf{A} \simeq \mathbf{U}_{k}\mathbf{\Sigma}_{k}\mathbf{V}_{k}^{\mathbf{T}} \triangleq \hat{\mathbf{A}} \text{ s.t. } k<r
\end{equation*}
\noindent where the reconstruction data $\hat{\mathbf{A}}$ is the low-rank approximation of $\mathbf{A}$, $\mathbf{U}_k = \begin{bmatrix} \mathbf{u}_1,  \cdots, \mathbf{u}_k \end{bmatrix}$, $\mathbf{V}_k = \begin{bmatrix} \mathbf{v}_1, \cdots,  \mathbf{v}_k \end{bmatrix}$, and $\Sigma_{k} = diag(\sigma_{1}, \cdots, \sigma_{k})$.
The error of the low-rank approximation is represented as follows:
\begin{equation*}
	\lVert \mathbf{A} - \hat{\mathbf{A}} \rVert^2_{F} = \lVert \sum_{i=k+1}^{r} \mathbf{\sigma}_i\mathbf{u}_i \mathbf{v}^T_i \rVert^{2}_{F} = \sum_{i=k+1}^{r} \mathbf{\sigma}^{2}_{i}
\end{equation*}
\noindent where $\lVert \cdot \rVert_{F}$ is the Frobenius norm of a matrix, and $r$ is the rank of the original matrix.
The parameter $k$ for low-rank approximation is determined by the following equation:
\begin{equation}
\label{eqn:threshold}
	k^{*} =\underset{1 \leq k \leq r} {\arg \min}  f(k) = \frac{ \sum_{i=1}^{k}\sigma^{2}_{i} }{\sum_{i=1}^{r}\sigma^{2}_{i}} \text{, s.t. } f(k)\geq \xi
\end{equation}
\noindent where $\xi$ is a threshold between $0$ and $1$.

\begin{figure*} [t]
	\centering
	\vspace{-4mm}
	\includegraphics [width=0.8\textwidth] {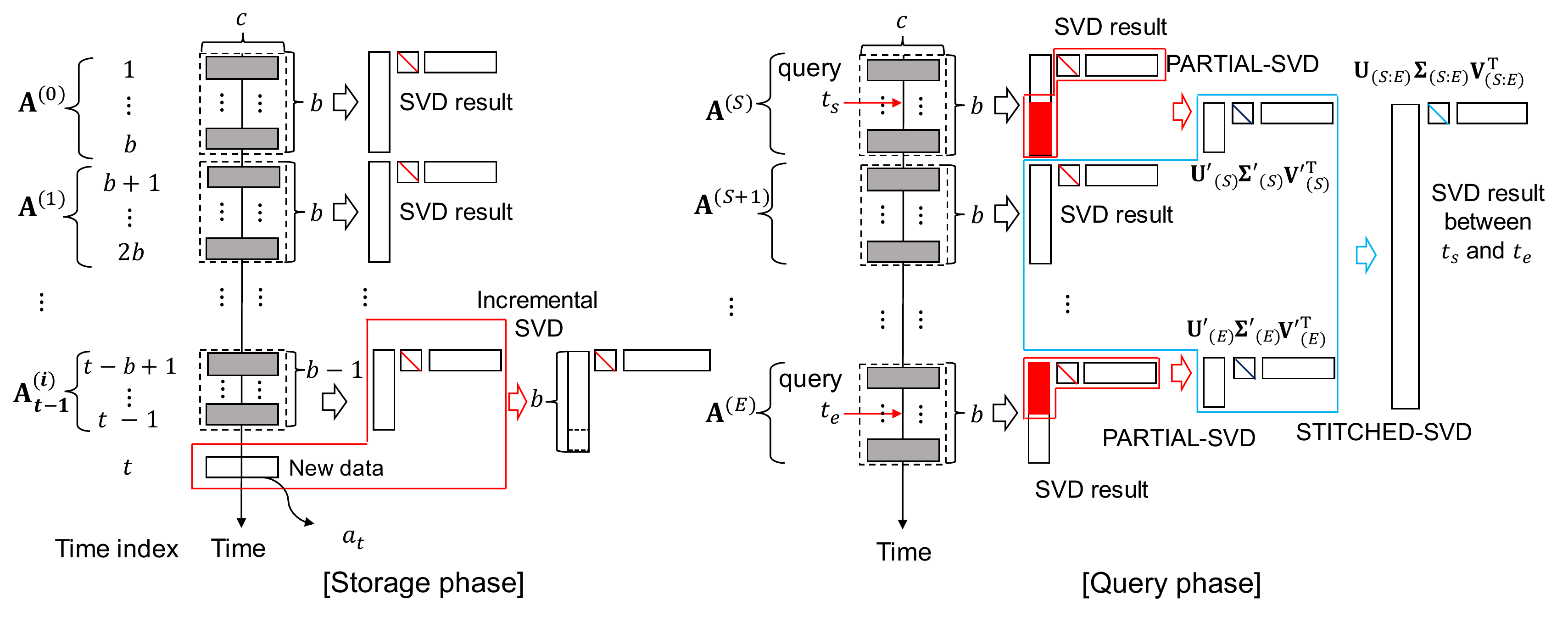}
	\caption{Overview of \method.
		In the storage phase, \method stores the SVD results for length-$b$ blocks of multiple time series data, updates the SVD result of the most recent block of a newly arrived data, and discards previously arrived data.
		In the query phase, \method computes the SVD result in a given time range $[t_s,t_e]$ based on the small SVD results.
		The query phase exploits \partialsvd (Section~\ref{subsec:partialSVD}) and \stitchedsvd (Section~\ref{subsec:block}) modules to process the time range query.
	}
	\label{fig:proposedmethod}
\end{figure*}

\subsection{Incremental SVD}
\label{subsec:IncreSVD}
Incremental SVD dynamically calculates the SVD result of a matrix with newly arrived data rows.
Suppose that we have the SVD result $\mathbf{U}_{t}$, $\mathbf{\Sigma}_{t}$, and $\mathbf{V}^T_{t}$ of a data matrix $\mathbf{A}_{t}\in \mathbb{R}^{n \times c}$ at time $t$.
When an $m \times c$ matrix $\mathbf{A}_{t+1}$  arrives at time $t+1$, the purpose of incremental SVD is to efficiently obtain the SVD result of $\begin{bmatrix}\mathbf{A}_{t} ~;~ \mathbf{A}_{t+1} \end{bmatrix}$ based on the previous result $\mathbf{U}_{t}$, $\mathbf{\Sigma}_{t}$, and $\mathbf{V}^T_{t}$. Note that $\begin{bmatrix}\mathbf{X} ~;~ \mathbf{Y} \end{bmatrix}$ denotes the vertical concatenation of two matrices $\mathbf{X}$ and $\mathbf{Y}$.
Incremental SVD is used to analyze patterns in time series data~\cite{sarwar2002incremental}, and several efficient methods for incremental SVD were proposed~\cite{brand2003fast,ross2008incremental}.
{This incremental SVD technique is exploited in our method to incrementally compress and store the data (see Algorithm~\ref{alg:storage_phase} in Section~\ref{sec:storing}).}

\subsection{Problem Definition}
\label{subsec:problem_definition}
We formally define the time range query problem as follows:
\begin{problem}
	\textnormal{\textsc{(Time Range Query on Multiple Time Series)}}
	\label{problem:time_range_query}
	\begin{itemize}
		\item{\textbf{Given:}
a time range $[t_s,t_e]$, and multiple time series data represented by a matrix $\mathbf{A} \in \mathbb{R}^{t \times c}$ where $t$ is the length of the time dimension, and $c$ is the number of the time series, }	
		\item{\textbf{Find:} the SVD result of the sub-matrix of $\mathbf{A}$ in the time range quickly, without storing all of $\mathbf{A}$. The SVD result includes $\mathbf{U} \in \mathbb{R}^{(t_e - t_s+1) \times k}$, $\mathbf{\Sigma} \in \mathbb{R}^{k \times k}$, and $\mathbf{V} \in \mathbb{R}^{c \times k}$ where $k$ is the rank of the sub-matrix.}
	\end{itemize}
%
\end{problem}

Applying the standard SVD or incremental SVD for the time range query is impractical for the following reasons.
Standard SVD needs to extract the sub-matrix corresponding to the time range before performing decomposition.
Iwen et al.~\cite{DBLP:journals/siammax/IwenO16} proposed a hierarchical and distributed approach for computing $\mat{\Sigma}$ and $\mat{V}$ except for $\mat{U}$ of a whole matrix $\mat{A}$.
{Zadeh et al.~\cite{DBLP:conf/kdd/ZadehMUYPVSSZ16} introduce Tall and Skinny SVD which obtains $\mathbf{\Sigma}$ and $\mathbf{V}$ by computing eigen-decomposition of $\mathbf{{A}^TA}$, and then computes $\mathbf{U}$ using $\mathbf{\Sigma}$, $\mathbf{V}$, and $\mathbf{A}$.
Halko et al.~\cite{DBLP:journals/siamrev/HalkoMT11} propose Randomized SVD which computes SVD of $\mathbf{A}$ using randomized approximation techniques.}
However, such methods are inefficient because they need to compute SVDs from scratch for multiple overlapping queries.
Furthermore, those methods need to keep the entire time series data $\mathbf{A}$, which is practically infeasible in many streaming applications.
Incremental SVD considers updates only on newly added data, and thus cannot perform SVD on a specific time range.

   To address these limitations, we propose an efficient method for the time range query in Section~\ref{sec:method}.

\section{Proposed Method}
\label{sec:method}

{
We propose \method, a fast and space-efficient method for extracting key patterns from multiple time series data in an arbitrary time range.}
We first give an overview of \method in Section~\ref{subsec:overview}.
We	 describe details of \method in Sections~\ref{sec:storing} and~\ref{sec:query}.
Finally, we analyze \method's time and space complexities in Section~\ref{subsec:Theo}.

\subsection{\textbf{Overview}}
\label{subsec:overview}
{\method efficiently extracts key patterns from multiple time series data in an arbitrary time range using SVD.}
The main challenges for the time range query problem (Problem~\ref{problem:time_range_query}) are as follows:
\begin{enumerate}
\item \textbf{Minimize the space cost.}
	The amount of multiple time series data increases over time.
	How can we reduce the space while supporting time range queries?
\item \textbf{Minimize the time cost.}
	How can we quickly compute SVD of multiple time series data in an arbitrary time range?
\end{enumerate}

We address the above challenges with the following ideas:


\begin{enumerate}
\item \textbf{Compress multiple time series data (\textbf{Section~\ref{sec:storing}}).}
{\method compresses the raw data using incremental SVD, and discards the raw data in the storage phase.}

\item \textbf{Avoid reconstructing the original data from the SVD results computed in the storage phase (\textbf{Sections~\ref{subsec:partialSVD} and~\ref{subsec:block}}).}
We propose \partialsvd and \stitchedsvd to compute SVD without reconstructing the original data corresponding to the query time range.

\item \textbf{Optimize the computational time of \stitchedsvd  (\textbf{Section~\ref{subsec:block}}).} We optimize the performance of \stitchedsvd by reducing numerical computations using a block matrix structure.
\end{enumerate}

\method comprises two phases: storage phase and query phase.
In the storage phase (Algorithm~\ref{alg:storage_phase}), \method stores the SVD results corresponding to length-$b$ blocks in the time series data in order to support time range queries as shown in Figure~\ref{fig:proposedmethod}.
When a new data arrives, \method incrementally updates the SVD result with the newly arrived data, block by block.
In the query phase (Algorithms~\ref{alg:opt_query_phase}), \method returns the SVD result for a given time range $[t_s, t_e]$.
The query phase utilizes our proposed \partialsvd and \stitchedsvd modules to process the time range query.
Partial SVD (Algorithm~\ref{alg:partial_svd}) manipulates the SVD result containing $t_s$ (or $t_e$) to match the query time range as shown in Figure~\ref{fig:proposedmethod}.
\stitchedsvd (Algorithms~\ref{alg:opt_query_phase}) efficiently computes the SVD result between $t_s$ and $t_e$ by stitching the SVD results for blocks in the time range.


\subsection{Storage Phase of \method}
\label{sec:storing}
{Given multiple time series stream $\mathbf{A}$, the objective of the storage phase is to incrementally compress the input data and discard the original input data $\mathbf{A}$ to achieve space efficiency.
A naive incremental SVD would update one large SVD result when the data are newly added.
However, this approach is impractical because the processing cost for the newly added data increases over time. 
Also, the naive incremental SVD does not support a time range query quickly in the query phase because it manipulates the large SVD result stored for the total time regardless of the query time range.

The storage phase of \method (Algorithm~\ref{alg:storage_phase}) is designed to efficiently process newly added data and quickly support time range queries.
Given multiple time series data $\mathbf{A}$, the storage phase of \method (Algorithm~\ref{alg:storage_phase}) incrementally compresses the input data block by block using incremental SVD, and discards the original input data $\mathbf{A}$ to reduce space cost.}
Assume the multiple time series data are represented by a matrix $\mathbf{A} \in \mathbb{R}^{t \times c}$ where $t$ is time length, and $c$ is the number of time series (e.g., sensors).
We conceptually divide the matrix $\mathbf{A}$ into length-$b$ blocks represented by $\Abmat{i} \in \mathbb{R}^{b \times c}$ as shown in Figure~\ref{fig:proposedmethod}.
We then store the low-rank approximation result of each block matrix $\Abmat{i}$, where we exploit an incremental SVD method in the process.
We formally define the block matrix $\Abmat{i}$ in Definition~\ref{def:block_matrix}.
\begin{definition}[Block matrix $\Abmat{i}$]
	\label{def:block_matrix}
		Suppose a multivariate time series is $\mat{A} = {\begin{bmatrix}
			\vect{a}_1; \vect{a}_2; \cdots; \vect{a}_t
		\end{bmatrix}} $ where $\vect{a}_{j} \in \mathbb{R}^{1 \times c}$ is the $j$-th row vector of $\mat{A}$, and $[;]$ denotes the vertical concatenation of vectors.
		The $i$-th block matrix $\Abmat{i}$ is then represented as follows:
		\begin{equation*}
			\Abmat{i} \equiv {\begin{bmatrix}
 			\vect{a}_{b \times i+1}; \vect{a}_{b \times i+2}; \cdots; \vect{a}_{b\times i+ b}
			\end{bmatrix}}	
		\end{equation*}
		where $b$ is a block size.
		In addition, $\Abmat{i}_{t-1}$ denotes the $i$-th block matrix at time $t-1$ where $i$ indicates the index of the most recent block as shown in Figure~\ref{fig:proposedmethod}.
		Note that the number of rows in $\Abmat{i}_{t-1}$ is less than or equal to $b$. \QEDB
\end{definition}

The computed SVD result $\Umat{i}$, $\Smat{i}$, and $\Vmat{i}$ of each block matrix $\Abmat{i}$ are stored as follows. 

\begin{definition}[Sets of SVD results $\Uset$, $\Sset$, and $\Vset$]
	\label{def:sets}
The sets $\Uset$, $\Sset$, and $\Vset$ store the SVD results
$\Umat{i}$, $\Smat{i}$, and $\Vmat{i}$ for all $i$, respectively. \QEDB
\end{definition}

Note that the original time series data are discarded, and we store only the SVD results which occupy less space than the original data.
The SVD results for block matrices are used in the query phase (Algorithm~\ref{alg:opt_query_phase}).
Now we are ready to describe the details of the storage phase.


 \begin{algorithm} [t]
	\caption{Storage phase of \method}\label{alg:storage_phase}
	\begin{algorithmic} [1]
		\small
		\algsetup{linenosize=\small}

		\renewcommand{\algorithmicrequire}{\textbf{Input:}}
		\renewcommand{\algorithmicensure}{\textbf{Output:}}
		    \REQUIRE multiple time series data \textbf{$\mathbf{A}$}
		    \ENSURE set $\Uset$ of left singular vector matrix $\Umat{i}$, set $\Sset$ of singular value matrix $\Smat{i}$, and set $\Vset$ of right singular vector matrix $\Vmat{i}$
		\renewcommand{\algorithmicrequire}{\textbf{Parameters:}}
		\REQUIRE block size $b$\\
		\STATE \textbf{Initialize:} $\Umat{0,1}$, $\Smat{0,1}$, and $\Vmat{0,1}$ $\gets$ SVD result of $\vect{a}_{1}$, block index $i \leftarrow 0$, and time index $t \leftarrow 2$
		\WHILE {$\vect{a}_{t}$ is newly arriving} \label{alg:storage_phase:new_data}
		\IF {$\Umat{i,t-1}$, $\Smat{i,t-1}$, $\Vmat{i,t-1}$ do not exist} \label{alg:storage_phase:check_block:exist}
			\STATE {$\Umat{i,t}$, $\Smat{i,t}$, $\Vmat{i,t}$ $\gets$ SVD result of $\vect{a}_{t}$} \label{alg:storage_phase:new_generate}
		\ELSE
			\STATE{$\Umat{i,t}$, $\Smat{i,t}$, $\Vmat{i,t}$ $\gets$ \incrementalsvd($\Umat{i,t-1}$, $\Smat{i,t-1}$, $\Vmat{i,t-1}$, $\vect{a}_{t}$) \label{alg:line:update}}
		\ENDIF
		\IF {the number of rows of $\Umat{i,t}$ is equal to $b$} \label{alg:storage_phase:check_block:start}
			\STATE \footnotesize {$\Uset \gets \Uset \cup \{\Umat{i,t}\}$}, {$\Sset \gets \Sset \cup \{\Smat{i,t}\}$}, {$\Vset \gets \Vset \cup \{\Vmat{i,t}\}$} \label{alg:storage_phase:store}
			\STATE {$i \gets i+1$} \label{alg:storage_phase:inc_i}
		\ENDIF  \label{alg:storage_phase:check_block:end}	
		\STATE {$t \gets t + 1$}
		\ENDWHILE
	\end{algorithmic}
\end{algorithm}

The storage phase (Algorithm~\ref{alg:storage_phase}) compresses the multiple time series data block by block using {incremental SVD} to support time range queries.
When new multiple time series data $\vect{a}_{t} \in \mathbb{R}^{1 \times c}$ are given at time $t$ (line~\ref{alg:storage_phase:new_data}), we generate the new SVD result of $\vect{a}_{t}$ for the next block matrix $\Abmat{i}_{t}$ if the SVD result are stored in $\Uset$, $\Sset$, and $\Vset$ at time $t-1$ (lines~\ref{alg:storage_phase:check_block:exist} and ~\ref{alg:storage_phase:new_generate}).
If not, we have the SVD result $\Umat{i,t-1}$, $\Smat{i,t-1}$, and $\Vmat{i,t-1}$ of the most recent block matrix $\Abmat{i}_{t-1}$ which is the $i$-th block matrix at time $t-1$.
{Assume that we have the SVD result $\Umat{i,t-1}$, $\Smat{i,t-1}$, and $\Vmat{i,t-1}$ of $\Abmat{i}_{t-1}$ (i.e., the block matrix from time $t-b+1$ to $t-1$ as seen in Figure~\ref{fig:proposedmethod}).
We then update the SVD result into $\Umat{i,t}$, $\Smat{i,t}$, and $\Vmat{i,t}$  for the new data $\vect{a}_{t}$ using an incremental SVD method (line~\ref{alg:line:update}).
If the number of rows of $\Umat{i,t}$ is $b$, we put the SVD result $\Umat{i,t}$, $\Smat{i,t}$, and $\Vmat{i,t}$ into $\Uset$, $\Sset$, and $\Vset$, respectively (lines \ref{alg:storage_phase:check_block:start}$\sim$\ref{alg:storage_phase:check_block:end}).}
Equations~\eqref{eqn:Collection} and \eqref{eqn:Collection2} represent the details of how to update the SVD result of $\Abmat{i}_{t-1}$ for the new incoming data $\vect{a}_{t}$, when $\Abmat{i}_{t-1}$ contains $b-1$ rows.
{$\Abmat{i}_{t}$ is represented by $\vect{a}_{t}$ and the SVD result of $\Abmat{i}_{t-1}$ in Equation~\eqref{eqn:Collection}:}
\begin{align}
\footnotesize
	\begin{split}
	\label{eqn:Collection}
		\Abmat{i}_{t} = \begin{bmatrix}
			\Abmat{i}_{t-1} \\
			\vect{a}_{t}
		\end{bmatrix} &\simeq
		\begin{bmatrix}
			\Umat{i,t-1}\Smat{i,t-1}\Vmatt{i,t-1} \\
			\vect{a}_{t}
		\end{bmatrix} \\
		&= \begin{bmatrix}
			\Umat{i,t-1} & \Zmat_{(b-1) \times 1} \\
			\Zmat_{1 \times k_{t-1}} & \Imat_{1 \times 1}	
		\end{bmatrix}
		\begin{bmatrix}
			\Smat{i,t-1}\Vmatt{i,t-1} \\
			\vect{a}_{t}
		\end{bmatrix}
	\end{split}
\end{align}
\noindent where $\Zmat_{x \times y}$ is an $x \times y$ zero matrix, and $\Imat_{x \times x}$ is an $x \times x$ identity matrix.
We then perform SVD to decompose {\footnotesize$\begin{bmatrix}
			\Smat{i,t-1}\Vmatt{i,t-1} \\
			\vect{a}_{t}
		\end{bmatrix}$} $\in \mathbb{R}^{(k_{t-1} + 1) \times c}$ into $\mat{\tilde{U}}\mat{\tilde{\Sigma}}\matt{\tilde{V}}$:

\vspace{-2mm}

\begin{align}
\footnotesize
	\label{eqn:Collection2}
	\begin{split}
	&\begin{bmatrix}
		\Umat{i,t-1} & \Zmat_{(b-1) \times 1} \\
		\Zmat_{1 \times k_{t-1}} & \Imat_{1 \times 1}	
	\end{bmatrix}
	\begin{bmatrix}
		\Smat{i,t-1}\Vmatt{i,t-1} \\
		\vect{a}_{t}
	\end{bmatrix} \\
	&=
	\begin{bmatrix}
		\Umat{i,t-1} & \Zmat_{(b-1) \times 1} \\
		\Zmat_{1 \times k_{t-1}} & \Imat_{1 \times 1}	
	\end{bmatrix}
	\mat{\tilde{U}}\mat{\tilde{\Sigma}}\matt{\tilde{V}}
	\triangleq \Umat{i,t}\Smat{i,t}\Vmatt{i,t}
	\end{split}
\end{align}
\noindent where $\Umat{i,t}=$ {\footnotesize$\begin{bmatrix}
		\Umat{i,t-1} & \Zmat_{(b-1) \times 1} \\
		\Zmat_{1 \times k_{t-1}} & \Imat_{1 \times 1}	
	\end{bmatrix}$}
	$\mat{\tilde{U}}$, $\Smat{i,t} = \mat{\tilde{\Sigma}}$, and $\Vmatt{i,t} = \matt{\tilde{V}}$.
Note that $\Umat{i,t}$ is a column orthogonal matrix since it is the product of two orthogonal matrices.
$\Vmat{i,t}$ is also column orthogonal, and $\Smat{i,t}$ is a diagonal matrix whose diagonal entries are sorted in the descending order.
Hence, $\Umat{i,t}$, $\Smat{i,t}$, and $\Vmat{i,t}$ are considered as the SVD result of $\Abmat{i}_{t}$ by the definition of SVD~\cite{trefethen1997numerical}.
The time index $t$ can be omitted as in $\Umat{i}$, $\Smat{i}$, $\Vmat{i}$, and $\Abmat{i}$, as described in Definitions~\ref{def:block_matrix} and \ref{def:sets}, if the number of rows of $\Umat{i, t}$ is $b$.
\begin{figure}[t]
	\centering
	\includegraphics[width=0.98\linewidth]{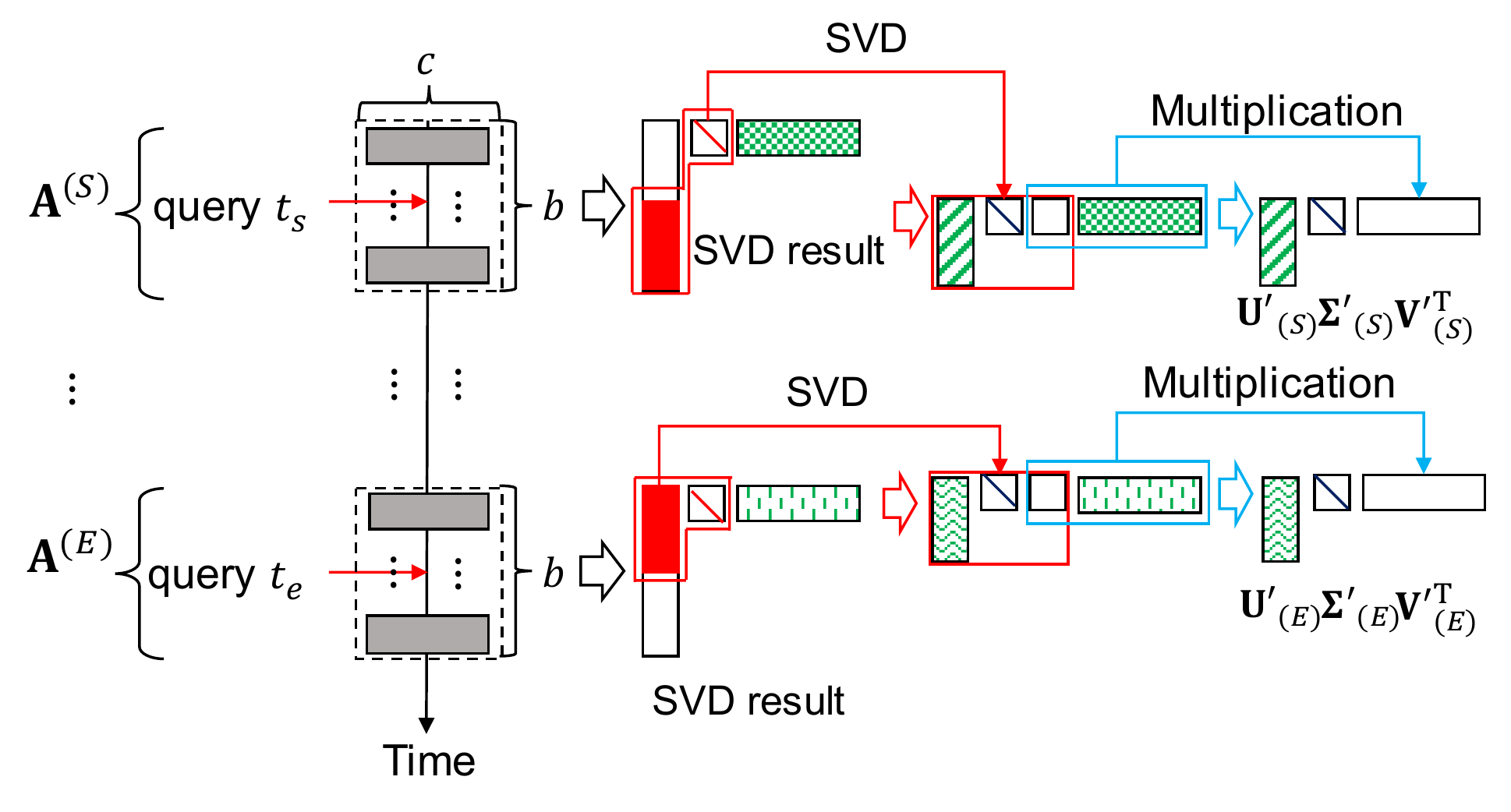}
	\caption{
		{
		Example of \partialsvd. 
		Given a time range query $[t_{s},t_{e}]$, we remove rows of $\Umat{S}$ and $\Umat{E}$ which are out of the query time range. 
{\partialsvd exploits SVD on the filtered matrices for left singular vector matrices and singular value matrices within the query time range (red-colored boxes).}
		Then \partialsvd computes right singular vector matrices of the output SVD results by multiplying relevant matrices (blue-colored boxes). 
		}
	}
	\label{fig:partial_svd}
\end{figure}

\subsection{Query Phase of \method}
\label{sec:query}
Given the starting point $t_s$ and the ending point $t_e$ of a time range query, the goal of the query phase of \method is to obtain the SVD result from $t_s$ to $t_e$.
A naive approach would reconstruct the time series data from the SVD results of the block matrices ranged between $t_s$ and $t_e$, and perform SVD on the reconstructed data in the range.
However, this approach requires heavy computations especially for a long time range query, and thus is not appropriate for serving time range queries quickly.

We propose two sub-modules, \partialsvd and \stitchedsvd, which are used in the query phase of our proposed method (Algorithm~\ref{alg:opt_query_phase}) to efficiently process time range queries by avoiding reconstruction of the raw data.
Let $S$ be the index of the block matrix including $t_s$, and $E$ be the index of the block matrix including $t_e$.
\partialsvd (Algorithm~\ref{alg:partial_svd}) adjusts the time range of the SVD results for $\Abmat{S}$ and $\Abmat{E}$ as seen in the red-colored boxes of Figure~\ref{fig:partial_svd} (line~\ref{alg:line:part} of Algorithm~\ref{alg:opt_query_phase}).
\stitchedsvd combines the SVD results of \partialsvd and those of block matrices from $\Abmat{S+1}$ to $\Abmat{E-1}$ (lines ~\ref{alg:opt_query_phase:vertical} to ~\ref{alg:opt_query_phase:block_diag} in Algorithm~\ref{alg:opt_query_phase}).
We describe the details of \partialsvd and \stitchedsvd in Sections~\ref{subsec:partialSVD} and \ref{subsec:block}, respectively.

\begin{algorithm} [t]
	\caption{Query phase of \method}\label{alg:opt_query_phase}
	\begin{algorithmic} [1]
		\algsetup{linenosize=\small}
		\small
		\renewcommand{\algorithmicrequire}{\textbf{Input:}}
		\renewcommand{\algorithmicensure}{\textbf{Output:}}
		\REQUIRE sets $\Uset$, $\Sset$, and $\Vset$ of block SVD results, starting point $t_{s}$, and ending point $t_{e}$
		\ENSURE SVD result $\Umat{S:E}$, $\Smat{S:E}$, and $\Vmat{S:E}$ in $[t_s, t_e]$
		\renewcommand{\algorithmicrequire}{\textbf{Parameters:}}
		\STATE $\Umat{S}'$, $\Smat{S}'$, $\Vmat{S}'$, $\Umat{E}'$, $\Smat{E}'$, and $\Vmat{E}'$  \\
		$\gets$ \partialsvd($t_s$, $t_e$, $\Uset, \Sset, \Vset$) \label{alg:line:part}
		\STATE $\mat{\Sigma}\matt{V} \gets [\mat{\Sigma'}_{(S)}\matt{V'}_{(S)}; \Smat{S+1}\Vmatt{S+1};\cdots;\mat{\Sigma'}_{(E)}\matt{V'}_{(E)}]$ \label{alg:opt_query_phase:vertical}
		\STATE $\mat{U}_{r}, \mat{\Sigma}_{r}, \matt{V}_{r} \gets$ low-rank approximation of $\mat{\Sigma}\matt{V}$ using SVD \label{alg:opt_query_phase:low_rank}
		\STATE $\Vmat{S:E} \gets\mat{V}_{r}$ and $\Smat{S:E} \gets \mat{\Sigma}_{r}$ \label{alg:opt_query_phase:set_v}
		\STATE $\Umat{S:E}$  $\gets$ $[
		\Umat{S}'\mat{U}'_{r(S)} ~;~ \Umat{S+1}\mat{U}_{r(S+1)} ~;~ \cdots ~;~ \Umat{E}'\mat{U}'_{r(E)}
		]$ \label{alg:opt_query_phase:block_diag}
		\RETURN $\Umat{S:E}$, $\Smat{S:E}$, and $\Vmat{S:E}$ \label{alg:opt_query_phase:return}
	\end{algorithmic}
\end{algorithm}

\subsubsection {\textbf{\partialsvd}}
  \label{subsec:partialSVD}

\vspace{-1.2mm}

This module manipulates the SVD results of block matrices $\Abmat{S}$ and $\Abmat{E}$ to return the SVD results in a given time range $[t_s, t_e]$.
As seen in Figure~\ref{fig:proposedmethod}, $\Abmat{S}$ may contain the time range before $t_s$, and $\Abmat{E}$ may include the time range after $t_e$.
Note that those time ranges are out of the time range of the given query;
thus, our goal for this module is to extract SVD results from $\Abmat{S}$ and $\Abmat{E}$ according to the time range query without reconstructing raw data.
Figure~\ref{fig:partial_svd} depicts the operation of \partialsvd.
For the block matrix $\Abmat{S}$ and its SVD $\Umat{S}\Smat{S}\Vmat{S}$,
\partialsvd first eliminates rows of left singular vector matrix $\Umat{S}$ which are out of the query time range.
After that, \partialsvd multiplies the remaining left singular vector matrix $\mat{X}_{s}\Umat{S}$ with the singular value matrix $\Smat{S}$, and performs SVD $\mat{\tilde{U}}_{(S)} \mat{\tilde{\Sigma}}_{(S)} \matt{\tilde{V}}_{(S)} \leftarrow \mat{X}_{s}\Umat{S}\Smat{S}$ of the resulting matrix.
The resulting singular vector matrix $\mat{\tilde{U}}_{(S)}$ and the singular value matrix $\mat{\tilde{\Sigma}}_{(S)}$ constitute the output of \partialsvd. 
The remaining right singular vector matrix output of \partialsvd is computed by multiplying the right singular vector matrix $\matt{\tilde{V}}_{(S)}$ with $\Vmatt{S}$.
Similar operations are performed for the block matrix $\Abmat{E}$ and its SVD $\Umat{E}\Smat{E}\Vmat{E}$.

Now, we describe the details of this module (Algorithm~\ref{alg:partial_svd}).
We first introduce elimination matrices which are used in \partialsvd to adjust the time range.	

\vspace{-1mm}

\begin{definition}[Elimination matrices]
Suppose $r_{S}$ is the number of rows to be eliminated in $\Abmat{S}$ according to $t_s$.
	Then $b_{S}=b-r_{S}$ is the number of remaining rows in $\Abmat{S}$.
	Similarly, let $r_{E}$ be the number of rows to be eliminated in $\Abmat{E}$ according to $t_e$; then $b_{E}=b-r_{E}$ is the number of remaining rows in $\Abmat{E}$.
The elimination matrices $\mat{X}_{s}$ and $\mat{X}_{e}$ for $\Abmat{S}$ and $\Abmat{E}$ are defined as follows:
	\begin{align}
		\label{eq:elimination_matrix}
		\mat{X}_{s} = \begin{bmatrix}
			\Zmat_{b_{S} \times r_{S}} & \Imat_{b_{S} \times b_{S}}
		\end{bmatrix}
			\mat{X}_{e} = \begin{bmatrix}
			\Imat_{b_{E} \times b_{E}} & \Zmat_{b_{E} \times r_{E}}
		\end{bmatrix}
	\end{align}
\QEDB
\end{definition}
\vspace{-3mm}

\begin{algorithm} [t]
	\caption{\partialsvd}\label{alg:partial_svd}
	\begin{algorithmic} [1]
		\algsetup{linenosize=\small}
		\small
		\renewcommand{\algorithmicrequire}{\textbf{Input:}}
		\renewcommand{\algorithmicensure}{\textbf{Output:}}
		\REQUIRE starting point $t_{s}$, ending point $t_{e}$, and sets $\Uset$, $\Sset$, and $\Vset$ of SVD results
		\ENSURE SVD results $\Umat{S}'$, $\Smat{S}'$, $\Vmat{S}'$, $\Umat{E}'$, $\Smat{E}'$,  and $\Vmat{E}'$
 of $\Abmat{S}$ and $\Abmat{E}$ within the time range 		 
		\STATE $S \gets t_s/b$ and $E \gets t_e/b$ \label{alg:partial_svd:SE}
		\STATE $b_S \gets b-t_s\%b$ and $b_E \gets t_e\%b$ (\%: the modulus operator) \label{alg:partial_svd:bSE}
		\STATE construct $\mat{X}_{s}$ and $\mat{X}_{e}$ based on $b_{S}$ and $b_{E}$ as in Equation~\eqref{eq:elimination_matrix} \label{alg:partial_svd:elimination_matrice}
		\STATE $\mat{\tilde{U}}_{(S)}$, $\mat{\tilde{\Sigma}}_{(S)}$, $\matt{\tilde{V}}_{(S)}$ $\gets$ low-rank approximation of $\mat{X}_{s}\Umat{S}\Smat{S}$ using SVD \label{alg:partial_svd:low_rank:starting}
		\STATE $\Umat{S}' \gets \mat{\tilde{U}}_{(S)}$, $\Smat{S}' \gets \mat{\tilde{\Sigma}}_{(S)}$, and  $\Vmattt{S} \gets \matt{\tilde{V}}_{(S)} \Vmatt{S}$ \label{alg:partial_svd:set:starting}
		\STATE $\mat{\tilde{U}}_{(E)}$, $\mat{\tilde{\Sigma}}_{(E)}$, $\matt{\tilde{V}}_{(E)}$ $\gets$ low-rank approximation of $\mat{X}_{e}\Umat{E}\Smat{E}$ using SVD \label{alg:partial_svd:low_rank:ending}
		\STATE $\Umat{E}' \gets \mat{\tilde{U}}_{(E)}$, $\Smat{E}' \gets \mat{\tilde{\Sigma}}_{(E)}$, and $\Vmattt{E} \gets \matt{\tilde{V}}_{(E)} \Vmatt{E}$  \label{alg:partial_svd:set:ending}
		\RETURN $\Umat{S}'$, $\Smat{S}'$, $\Vmat{S}'$, $\Umat{E}'$, $\Smat{E}'$, and $\Vmat{E}'$
	\end{algorithmic}
\end{algorithm}

The matrices $\Abmat{S}$ and $\Abmat{E}$ are multiplied to the elimination matrices,
and the time ranges of the resulting matrices $\mat{X}_{s}\Abmat{S}$ and $\mat{X}_{e}\Abmat{E}$ are within the query time range $[t_s, t_e]$.
\partialsvd constructs those elimination matrices based on $b_{S}$ and $b_{E}$ (line~\ref{alg:partial_svd:elimination_matrice} of Algorithm~\ref{alg:partial_svd}).
The filtered block matrix $\mat{X}_{s}\Abmat{S}$ is given by
\vspace{-1.5mm}
\begin{equation}
	\label{eq:partial:step_1}
	\mat{X}_{s}\Abmat{S} \simeq \mat{X}_{s}(\Umat{S}\Smat{S}\Vmatt{S}) = (\mat{X}_{s}\Umat{S}\Smat{S})\Vmatt{S}
\end{equation}
\vspace{-2mm}
\noindent where $\Abmat{S} \simeq  \Umat{S}\Smat{S}\Vmatt{S}$ was computed at the storage phase.

\vspace{0.8mm}

\noindent \partialsvd decomposes $\mat{X}_{s}\Umat{S}\Smat{S}$ into $\mat{\tilde{U}}_{(S)}\mat{\tilde{\Sigma}}_{(S)}\matt{\tilde{V}}_{(S)}$ via SVD and low-rank approximation with threshold $\xi$ {since $\mat{X}_{s}\Umat{S}$ is not a column orthogonal matrix, and $\mat{X}_{s}\Umat{S}\Smat{S}\Vmatt{S}$ is not a form of the SVD result}; then, Equation~\eqref{eq:partial:step_1} is written as follows:
\begin{equation}
	\label{eq:partial:step_2}
\small\hspace{-3mm}(\mat{X}_{s}\Umat{S}\Smat{S})\Vmatt{S} \! \simeq \!\mat{\tilde{U}}_{(S)}\mat{\tilde{\Sigma}}_{(S)}(\mat{\tilde{V}}_{(S)}\Vmat{S})^\text{T} \!=\! \Umat{S}'\Smat{S}'\Vmattt{S}
\end{equation}
\noindent where $\Umat{S}'=\mat{\tilde{U}}_{(S)}$, $\Smat{S}'=\mat{\tilde{\Sigma}}_{(S)}$, and $\Vmattt{S}=\matt{\tilde{V}}_{(S)}\Vmatt{S}$.
In line~\ref{alg:partial_svd:low_rank:starting}, \partialsvd performs SVD on $\mat{X}_{s}\Umat{S}\Smat{S}$, and in line~\ref{alg:partial_svd:set:starting}
it computes $\Vmattt{S}=\matt{\tilde{V}}_{(S)}\Vmatt{S}$. 
\partialsvd similarly computes the SVD result of $\mat{X}_{e}\Abmat{E}$ in lines \ref{alg:partial_svd:low_rank:ending}$\sim$\ref{alg:partial_svd:set:ending} of Algorithm~\ref{alg:partial_svd}.

\vspace{-1mm}

\subsubsection{\textbf{\stitchedsvd}}
\label{subsec:block}
This module combines the \partialsvd of $\Abmat{S}$ and $\Abmat{E}$, and the stored SVD results of blocks matrices $\Abmat{S+1}, \Abmat{S+1}, \cdots, \Abmat{E-1}$ in the query time range $[t_{s}, t_{e}]$ to return the final SVD result corresponding to the query range as shown in Figure~\ref{fig:proposedmethod}.
{A naive approach is to reconstruct the data blocks using the stored SVD results and perform SVD on the reconstructed data of the given query time range.
However, this approach cannot provide fast query speed for a long time range due to heavy computations induced by the reconstruction and the following SVD.
The goal of \stitchedsvd is to efficiently stitch the SVD results in the query time range by avoiding reconstruction and minimizing the numerical computation of matrix multiplication.}

Specifically, \stitchedsvd stitches several consecutive block SVD results together to compute the SVD corresponding to the query time range: is.e., it combines the SVD result $\Umat{i}$, $\Smat{i}$, and $\Vmat{i}$ of the $i$th block matrix $\Abmat{i}$, for $i=S..E$, to compute the SVD $\Umat{S:E}$, $\Smat{S:E}$, and $\Vmat{S:E}$.
The main idea is 1) to carefully decouple the matrices $\Umat{i}$ from $\Smat{i}$$\Vmat{i}$,
2) construct a stacked matrix containing $\Smat{i}$$\Vmat{i}$ for $i=S..E$,
3) perform SVD on the stacked matrix to get the singular value matrix and the right singular vector matrix of the final SVD result,
and 4) carefully combine $\Umat{i}$ with the left singular matrix of SVD of the stacked matrix to get the left singular vector matrix of the final SVD result.


Lines 2 to 5 of Algorithm~\ref{alg:opt_query_phase} present how stitched SVD matrices are computed.
First, we construct $\footnotesize \mat{\Sigma}\matt{V}$ based on the block matrix structure where $\footnotesize \mat{\Sigma}\matt{V}$ is equal to $\footnotesize {\begin{bmatrix} \mat{\Sigma'}_{(S)}\matt{V'}_{(S)} ~;~ \Smat{S+1}\Vmatt{S+1} ~;~  \cdots ~;~ \mat{\Sigma'}_{(E)}\matt{V'}_{(E)} \end{bmatrix}}^T$.
After organizing the block matrix structure of $\footnotesize \mat{\Sigma}\matt{V}$, we define block diagonal matrix $diag(S:E)$ as follows.
\begin{definition}[Block diagonal matrix]
	\label{def:blockdiagonal}
	Suppose $\Umat{S}'$ and $\Umat{E}'$ are the left singular vector matrices produced by \partialsvd.
Let $\Umat{S+1}$, $\Umat{S+2}$, $\cdots$, $\Umat{E-1}$ be the left singular vector matrices in $\Uset$.
{The block diagonal matrix $diag(S:E)$ is defined as follows:}
	\begin{equation}
	\label{blockdiag}
	\footnotesize
		diag(S:E) = \begin{bmatrix}
		\Umat{S}' & \mathbf{O} & \cdots & \Zmat \\
		\Zmat &\Umat{S+1} &  & \vdots \\
		\vdots  &  & \ddots & \vdots   \\
		\Zmat & \cdots & \cdots & \Umat{E}' \\
		\end{bmatrix} \nonumber
	\end{equation}
\QEDB
	\vspace{-2mm}
\end{definition}

{Then, the matrix corresponding to the time range query $[t_s, t_e]$ is represented as follows:}
\begin{align}
\footnotesize
	\label{eq:stitched_svd:optimized:block}
	\begin{split}
		\Abmat{S:E} &= \begin{bmatrix}
			\mat{X}_{s}\Abmat{S} \\
			\Abmat{S+1} \\
			\cdots\\
			\mat{X}_{e}\Abmat{E}
		\end{bmatrix}
		\simeq
		\begin{bmatrix}
			\Umat{S}'\Smat{S}'\Vmattt{S}\\
			\Umat{S+1}\Smat{S+1}\Vmatt{S+1}\\
			\cdots \\
			\Umat{E}'\Smat{E}'\Vmattt{E}\\
		\end{bmatrix}
		= diag(S:E)\mat{\Sigma}\matt{V}
	\end{split}	
\end{align}
{\noindent where $\mat{X}_{s}$ and $\mat{X}_{e}$ are elimination matrices of \partialsvd, and $\mat{\Sigma}\matt{V}$ is equal to $\footnotesize {\begin{bmatrix} \mat{\Sigma'}_{(S)}\matt{V'}_{(S)} ~;~ \Smat{S+1}\Vmatt{S+1} ~;~  \cdots ~;~ \mat{\Sigma'}_{(E)}\matt{V'}_{(E)} \end{bmatrix}}^T$.
As we apply SVD and low-rank approximation to $\mat{\Sigma}\matt{V}$, Equation~\eqref{eq:stitched_svd:optimized:block} becomes as follows:}

\vspace{-4mm}
\begin{align}
\label{eqn:diag_eqn}
	\begin{split}
		diag(S:E)\mat{\Sigma}\matt{V} &\simeq diag(S:E)\mat{U}_{r}\mat{\Sigma}_{r}\matt{V}_{r} \\
		&= \Umat{S:E}\Smat{S:E}\Vmatt{S:E}
	\end{split}
\end{align}
\noindent where {$\mat{\Sigma}\matt{V}\simeq\mat{U}_{r}\mat{\Sigma}_{r}\matt{V}_{r}$ is computed by low-rank approximation and SVD, $\Umat{S:E}=diag(S:E)\mat{U}_{r}$, $\Smat{S:E}=\mat{\Sigma}_{r}$, and $\Vmat{S:E}=\mat{V}_{r}$.}
To avoid matrix multiplication between $\mat{U}_{r}$ and zero sub-matrices of $diag(S:E)$,
	we split $\mat{U}_{r}$ block by block as follows:
{
\begin{equation*}
\footnotesize
\mat{U}_r = {\begin{bmatrix} \mat{U}'_{r(S)} ~;~ \mat{U}_{r(S+1)} ~;~ \cdots ~;~ \mat{U}'_{r(E)}\end{bmatrix}}^T
\end{equation*}
\noindent where $\mat{U}'_{r(S)}$ and $\mat{U}'_{r(E)}$ correspond to $\Umat{S}'$ and $\Umat{E}'$, respectively, and $\mat{U}_{r(i)}$ correspond to $\Umat{i}$ for $S+1 \leq i \leq E-1$.
Then $\Umat{S:E}=diag(S:E)\mat{U}_{r}$ of Equation~\eqref{eqn:diag_eqn} is computed as follows:
\begin{align}
	\label{blockdiag2}
	\begin{split}
	&\Umat{S:E}=diag(S:E)\mat{U}_{r}\\ &=
	{\begin{bmatrix}
		\Umat{S}'\mat{U}'_{r(S)} ~;~ \Umat{S+1}\mat{U}_{r(S+1)} ~;~ \cdots ~;~ \Umat{E}'\mat{U}'_{r(E)}
		\end{bmatrix}}^T
	\end{split}
\end{align}	
}

\vspace{-2mm}

The column orthogonality of $\Umat{S:E}=diag(S:E)\mat{U}_{r}$ is established as it is the product of two column orthogonal matrices; also, $\Vmat{S:E}=\mat{V}_{r}$ is column orthogonal.
{Note that we perform \partialsvd to satisfy column orthogonal condition before performing \stitchedsvd.}
\subsection{Theoretical Analysis}
\label{subsec:Theo}
We theoretically analyze our proposed method \method in terms of time and memory cost.
Note that a collection of multiple time series data $\mathbf{A}$ is a dense matrix, and the time complexity to compute SVD of $\mat{A} \in \mathbb{R}^{t \times c}$ is $O(min(t^2c, tc^2))$.
%

\textbf{Time Complexity}.
We analyze the time complexities of the storage and the query phases in Theorems \ref{theo:time of store} and \ref{theo:time of Query}, respectively.
{
\begin{theorem}
	\label{theo:time of store}
When a vector $\vect{a}_{t} \in \mathbb{R}^{1 \times c}$ is given at time $t$, the computation cost of storage phase in \method is $O(k^2(b+c))$, where $k$ is the number of singular values.
\end{theorem}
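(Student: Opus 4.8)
The plan is to isolate the dominant per-step cost of the storage phase (Algorithm~\ref{alg:storage_phase}), which is the \incrementalsvd update on line~\ref{alg:line:update}, and to bound each numerical operation it performs as described by Equations~\eqref{eqn:Collection} and~\eqref{eqn:Collection2}. When no block is in progress (line~\ref{alg:storage_phase:new_generate}), the method only computes the SVD of $\vect{a}_{t} \in \mathbb{R}^{1 \times c}$, which costs $O(c)$ and is dominated by the update cost; so I would analyze the general case in which $\Umat{i,t-1}$, $\Smat{i,t-1}$, $\Vmat{i,t-1}$ already exist. Throughout, I would use the fact that the low-rank approximation retains at most $k$ singular values, so every intermediate rank $k_{t-1}$ satisfies $k_{t-1} \le k$, and I would assume $k < c$ so that the $k$-dimension is the small one.

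I would then split the update into three operations and bound each. First, forming the inner matrix $\begin{bmatrix} \Smat{i,t-1}\Vmatt{i,t-1} ~;~ \vect{a}_{t} \end{bmatrix} \in \mathbb{R}^{(k_{t-1}+1) \times c}$ of Equation~\eqref{eqn:Collection}: since $\Smat{i,t-1}$ is diagonal, the product $\Smat{i,t-1}\Vmatt{i,t-1}$ merely rescales the $k_{t-1}$ rows of $\Vmatt{i,t-1} \in \mathbb{R}^{k_{t-1} \times c}$, costing $O(k_{t-1}c) = O(kc)$. Second, the SVD in Equation~\eqref{eqn:Collection2} is applied to this $(k_{t-1}+1) \times c$ matrix; using the stated SVD cost $O(\min(m^2 n, m n^2))$ with $m = k_{t-1}+1$ and $n = c$, and since $k < c$ selects the first branch, this costs $O(k^2 c)$. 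Third, reconstructing $\Umat{i,t} = \begin{bmatrix} \Umat{i,t-1} & \Zmat \\ \Zmat & \Imat \end{bmatrix}\mat{\tilde{U}}$: here I would exploit the block structure rather than multiply naively. The nonzero block $\Umat{i,t-1} \in \mathbb{R}^{(b-1) \times k_{t-1}}$ multiplies only the top $k_{t-1}$ rows of $\mat{\tilde{U}} \in \mathbb{R}^{(k_{t-1}+1) \times k_{t}}$, while the trailing identity block copies a single row, so the whole product costs $O(b\, k_{t-1} k_{t}) = O(b k^2)$.

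Summing the three bounds gives $O(kc + k^2 c + b k^2) = O(k^2 c + b k^2) = O(k^2(b+c))$, which is the claimed cost. I expect the main obstacle to be the third step: one must argue that the block-diagonal factor is exploited so that the formation of $\Umat{i,t}$ avoids an $O(bkc)$ dense multiplication, and must confirm that the column-orthogonality argument used in the surrounding text does not force recomputation of any $c$-dimensional object. A secondary subtlety is justifying the choice of the $O(k^2 c)$ branch of the SVD cost, which requires the standing assumption $k < c$; I would state this explicitly so the bound $O(k^2(b+c))$ does not silently hide an $O(kc^2)$ term.
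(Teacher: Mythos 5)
Your proof is correct and follows essentially the same route as the paper: bound the SVD of the $(k_{t-1}+1)\times c$ stacked matrix by $O(\min(c^2k, ck^2))$, bound the product forming $\Umat{i,t}$ by $O(b(k_{t-1}+1)k_t)=O(bk^2)$, and simplify using $k_{t-1}=k_t=k$ and $k<c$. The only cosmetic difference is your worry about avoiding an $O(bkc)$ dense multiplication in the third step, which does not actually arise---the right-hand factor $\mat{\tilde{U}}$ is only $(k_{t-1}+1)\times k_t$, so even the naive product costs $O(bk^2)$, which is exactly the bound the paper uses.
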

\begin{proof}
	Performing SVD of {\footnotesize${\begin{bmatrix} \Smat{i,t-1}\Vmatt{i,t-1} ~;~ \vect{a}_{t} \end{bmatrix}}$} takes $O(min(c^{2}$
	$(1+k_{t-1}),c(1+k_{t-1})^{2})$, and multiplication of {\footnotesize$\begin{bmatrix}
	\Umat{i,t-1} & \Zmat \\
	\Zmat & \Imat	
	\end{bmatrix}$} and $\mat{\tilde{U}}$ takes $O(b(k_{t-1}+1)k_{t})$ since the row length of {\footnotesize$\begin{bmatrix}
	\Umat{i,t-1} & \Zmat \\
	\Zmat & \Imat	
	\end{bmatrix}$} is always smaller than or equal to the block size $b$.
	Assume $k_{t-1}$ and $k_{t}$ are equal to $k$. 
	The total computational cost of storage phase in \method is O($min(c^{2}k,ck^{2})$+ $bk^2$).
	We simply express the computational cost of storing the incoming data at each time tick as $O(k^2(b+c))$ since the number $c$ of columns is generally greater than $k$.
\end{proof}
In Theorem~\ref{theo:time of store}, the computation of storing the incoming data at each time tick takes constant time since $b$ and $c$ are constants and $k$ is smaller than $c$.
}
{
\begin{theorem}
	\label{theo:time of Query}
	Given a time range query $[t_s, t_e]$, the time cost of query phase (Algorithm \ref{alg:opt_query_phase}) is $O((t_e-t_s)k(k+\frac{c^2}{b})+{k^2}(b+c))$.

\end{theorem}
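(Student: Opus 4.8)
The plan is to walk through the query phase (Algorithm~\ref{alg:opt_query_phase}) line by line, bound the cost of each step, and sum the contributions. The guiding idea is that every operation should grow only \emph{linearly} in the number $E-S+1 = O(\frac{t_e-t_s}{b})$ of blocks covered by the range, with each block contributing a constant-size, $O(k)$-ranked piece; the four surviving terms of the claimed bound will come from four distinct steps.

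First I would bound line~\ref{alg:line:part}, the call to \partialsvd (Algorithm~\ref{alg:partial_svd}). Forming $\mat{X}_{s}\Umat{S}\Smat{S}$ costs only $O(bk)$, since $\mat{X}_{s}$ merely selects rows and $\Smat{S}$ is diagonal; its SVD on a matrix of size at most $b \times k$ costs $O(bk^2)$, and computing $\Vmattt{S}=\matt{\tilde{V}}_{(S)}\Vmatt{S}$ multiplies a $k\times k$ by a $k\times c$ matrix at cost $O(k^2 c)$. The identical treatment of $\Abmat{E}$ only doubles the constant, so \partialsvd runs in $O(k^2(b+c))$, which furnishes the additive $k^2(b+c)$ term of the claim.

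Next I would analyze the stacking (line~\ref{alg:opt_query_phase:vertical}) and the central SVD (line~\ref{alg:opt_query_phase:low_rank}). Each of the $O(\frac{t_e-t_s}{b})$ blocks contributes a product $\Smat{i}\Vmatt{i}$ of a $k\times k$ diagonal with a $k\times c$ matrix at cost $O(kc)$, hence $O(\frac{(t_e-t_s)kc}{b})$ in total, which is dominated by the SVD cost below. The stacked matrix $\mat{\Sigma}\matt{V}$ has $c$ columns and $m = O(\frac{(t_e-t_s)k}{b})$ rows (the sum of the per-block ranks), so its SVD costs $O(\min(m^2c, mc^2)) \le O(mc^2) = O(\frac{(t_e-t_s)kc^2}{b})$, giving the $(t_e-t_s)k\frac{c^2}{b}$ term.

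Finally I would handle line~\ref{alg:opt_query_phase:block_diag}, the reconstruction of $\Umat{S:E}$, which I expect to be the main obstacle to charge correctly. A naive multiplication $diag(S:E)\mat{U}_{r}$ would touch the zero sub-blocks and inflate the cost, so I rely on the block-wise factorization of Equation~\eqref{blockdiag2}, where each block product $\Umat{i}\mat{U}_{r(i)}$ multiplies a $b\times k$ by a $k\times k$ matrix at cost $O(bk^2)$. Summing over the $O(\frac{t_e-t_s}{b})$ blocks yields $O((t_e-t_s)k^2)$, the remaining term. Adding the four contributions and absorbing the dominated stacking cost gives the total $O((t_e-t_s)k(k+\frac{c^2}{b})+k^2(b+c))$, as claimed. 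The subtle points to verify are that each block's rank remains $O(k)$ after the low-rank truncations controlled by the threshold of Equation~\eqref{eqn:threshold}, and that $\mat{U}_{r}$ splits conformally with $diag(S:E)$ so that the block-wise product in Equation~\eqref{blockdiag2} is legitimate.
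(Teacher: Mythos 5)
Your proposal is correct and follows essentially the same route as the paper's proof: it bounds \partialsvd by $O(k^2(b+c))$, the SVD of the stacked $O(\frac{(t_e-t_s)k}{b})\times c$ matrix by $O(\frac{(t_e-t_s)kc^2}{b})$, and the block-wise product of Equation~\eqref{blockdiag2} by $O((t_e-t_s)k^2)$, under the same simplifying assumption that all per-block ranks are $O(k)$. The only (harmless) differences are that you bound $\min(m^2c,mc^2)\le mc^2$ directly rather than invoking the paper's assumption that the total stacked rank exceeds $c$, and you explicitly account for the cost of forming the $\Smat{i}\Vmatt{i}$ products, which the paper leaves implicit.
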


\begin{proof}
It takes $O((b-r_{S}+c)k'^2_{(S)} + (b-r_{E}+c)k'^2_{(E)})$ to compute \partialsvd where $k'_{(S)}$ and $k'_{(E)}$ are the number of singular values computed by \partialsvd (line~\ref{alg:line:part} in Algorithm~\ref{alg:opt_query_phase}).
The computational time to perform SVD of $\footnotesize {\begin{bmatrix} \mat{\Sigma'}_{(S)}\matt{V'}_{(S)} ~;~ \Smat{S+1}\Vmatt{S+1} ~;~ \cdots ~;~ \mat{\Sigma'}_{(E)}\matt{V'}_{(E)} \end{bmatrix}}$ depends on $O(min(c^2(k'_{(S)}+k'_{(E)}+{\sum_{i=S+1}^{E-1} k_{(i)}})$, $c(k'_{(S)}+k'_{(E)}+{\sum_{i=S+1}^{E-1} k_{(i)}})^2)$ in \stitchedsvd since horizontal and vertical length of the matrix are $c$ and $(k'_{(S)}+k'_{(E)}+{\sum_{i=S+1}^{E-1} k_{(i)}})$, respectively (lines~\ref{alg:opt_query_phase:vertical}$\sim$~\ref{alg:opt_query_phase:set_v} in Algorithm~\ref{alg:opt_query_phase}).
Also, the computational time of block matrix multiplication for $\Umat{S:E}$ (line~\ref{alg:opt_query_phase:block_diag} in Algorithm~\ref{alg:opt_query_phase}) takes O($k_{(S:E)}$\\($(b-r_{S})k'_{(S)}$$+(b-r_{E})k'_{(E)}+b{\sum_{i=S+1}^{E-1}k_{(i)}}$)) where $k_{(S:E)}$ is the number of singular values with respect to the SVD result of a given time range $[t_s,t_e]$.
Let all $k_{(i)}$'s be $k$ in query phase, $k'_{(S)}+k'_{(E)}+{\sum_{i=S+1}^{E-1} k_{(i)}}$ be larger than $c$; also, replace $b-r_{i}$ with block size $b$ since $b$ is always greater than $b-r_{i}$.
Then, the computational time of \partialsvd and \stitchedsvd takes $O(k^2(b+c))$ and $O((t_e-t_s)k(k+\frac{c^2}{b}))$, respectively.
We can simply express the computational cost of \method as $O((t_e-t_s)k(k+\frac{c^2}{b})+{k^2}(b+c))$.
\end{proof}

Theorem~\ref{theo:time of Query} implies that the computational time of \method in query phase linearly depends on the time range $(t_e-t_s)$.
}

\textbf{Space Complexity.} We analyze the space complexity of the storage phase in Theorem~\ref{theo:space of store}.
{
\begin{theorem}
	\label{theo:space of store}
{	Space complexity of \method for storing data is $O(tk(1+\frac{k}{b}+\frac{c}{b}))$ where $t$ is the total time length, and $k$ is the number of singular values.}
\end{theorem}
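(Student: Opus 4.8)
The plan is to account directly for the total space occupied by the stored SVD factors, since by Definition~\ref{def:sets} the storage phase retains only the sets $\Uset$, $\Sset$, $\Vset$ and discards the raw matrix $\mat{A}$. First I would fix the number of stored units: partitioning the time dimension of length $t$ into length-$b$ blocks yields $t/b$ block matrices $\Abmat{i} \in \mathbb{R}^{b \times c}$ (Definition~\ref{def:block_matrix}), each of which contributes one triple $(\Umat{i}, \Smat{i}, \Vmat{i})$ to storage.

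Next I would tally the space of the three factors kept for a single block at rank $k$. The left singular vector matrix $\Umat{i} \in \mathbb{R}^{b \times k}$ occupies $O(bk)$ space, the singular value matrix $\Smat{i} \in \mathbb{R}^{k \times k}$ occupies $O(k^2)$ space, and the right singular vector matrix $\Vmat{i} \in \mathbb{R}^{c \times k}$ occupies $O(ck)$ space. Here I would invoke the uniform-rank assumption, taking each per-block rank $k_{(i)}$ to equal $k$, so that these dimensions are common across all $t/b$ blocks.

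Then I would multiply the per-block cost by the number of blocks and sum the three contributions:
\begin{equation*}
	\frac{t}{b}\bigl(O(bk) + O(k^2) + O(ck)\bigr) = O\!\left(tk + \frac{tk^2}{b} + \frac{tck}{b}\right),
\end{equation*}
and finally factor $tk$ out of each term to obtain $O\!\left(tk\bigl(1 + \frac{k}{b} + \frac{c}{b}\bigr)\right)$, which is exactly the claimed bound.

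The arithmetic is routine; the one point deserving care is the treatment of $\Smat{i}$. The $\frac{tk^2}{b}$ term can arise \emph{only} from accounting for the singular value matrix as a full $k \times k$ block, since a diagonal-only representation would cost $O(k)$ per block and fold harmlessly into the dominant $tk$ term. I would therefore be explicit that $\Smat{i}$ is charged $O(k^2)$. I would also observe that the discarded raw data contributes nothing to the final tally, which is precisely the source of the space saving over the naive approach of retaining all of $\mat{A}$.
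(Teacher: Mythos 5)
Your proposal is correct and follows essentially the same route as the paper: count the $t/b$ stored triples, charge each block $O(k_{(i)}(b + k_{(i)} + c))$ for its three factors, set $k_{(i)} = k$, and simplify. Your added remark that the $\frac{tk^2}{b}$ term stems from treating $\Smat{i}$ as a dense $k \times k$ matrix is a fair observation but does not change the argument, which matches the paper's.
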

\begin{proof}
	At time $t$, we have $\lfloor \frac{t}{b} \rfloor$ SVD results where $\Umat{i}\in\mathbb{R}^{b_i \times k_{(i)}}$, $\Smat{i}\in\mathbb{R}^{k_{(i)} \times k_{(i)}}$, and $\Vmat{i}\in\mathbb{R}^{k_{(i)} \times c}$.
	Therefore, the number of elements in every matrix we have is $O(\sum_{i=1}^{\lfloor \frac{t}{b} \rfloor}k_{(i)}(b+k_{(i)}+c))$. Assuming $k_{(i)}$ is equal to $k$, we briefly express the space cost of \method as $O(tk(1+\frac{k}{b}+\frac{c}{b}))$.
\end{proof}
The space cost linearly depends on $t$ and $k$, but $k$ is  much smaller than the number $c$ of time series.
Therefore, \method efficiently compresses incoming data using space linear to time length with low coefficient.
}

\section{Experiment}
\label{sec:experiment}

\begin{table}[t]
	
	\caption{Description of real-world multiple time series datasets. Each dataset is a matrix in $\mathbb{R}^{t \times c}$ where $t$ corresponds to total length (time), and $c$ corresponds to the number of time series.
	}
	\centering
	\label{tab:Description}
	\begin{tabular}{lrr}
		
		\toprule
		\textbf{Dataset} & \textbf{Total length ($t$)} & \textbf{Attribute ($c$)} \\
		\midrule
		 Activity\tablefootnote{\url{https://archive.ics.uci.edu/ml/datasets/PAMAP2+Physical+Activity+Monitoring}}  & 382,000  & 41\\
		 Gas\tablefootnote{\url{https://archive.ics.uci.edu/ml/datasets/Gas+sensor+array+under+dynamic+gas+mixtures}} &  4,107,000 & 16 \\
		 London\tablefootnote{\url{http://www.londonair.org.uk/london/asp/publicdetails.asp}} & 350,000 & 10 \\
		\bottomrule
	\end{tabular}
\end{table}

We aim to answer the following questions to evaluate the performance of our method \method from experiments.

\begin{itemize}
  \item \textbf{Q1. Time cost (Section \ref{subsec:time_cost}).} How quickly does \method process time range queries compared to other methods?
  \item \textbf{Q2. Space cost (Section \ref{subsec:space_cost}).} How much memory space does \method require for compressed blocks?
  \item \textbf{Q3. Time-Space-Accuracy Trade-off (Section \ref{subsec:tradeoff}).} What are the tradeoffs between query time, space, and accuracy by \method compared to other baselines?
  \item \textbf{Q4. Parameter (Section \ref{subsec:param_sense}).} How does the block size $b$ in Algorithm \ref{alg:storage_phase} affect the performance of \method in terms of time and space?
\end{itemize}

\subsection{Experiment Settings}
\label{subsec:setting}

\begin{figure} [t]
	\subfloat[Running time of \method's storage phase in Activity dataset]{\includegraphics[ width=0.22\textwidth]{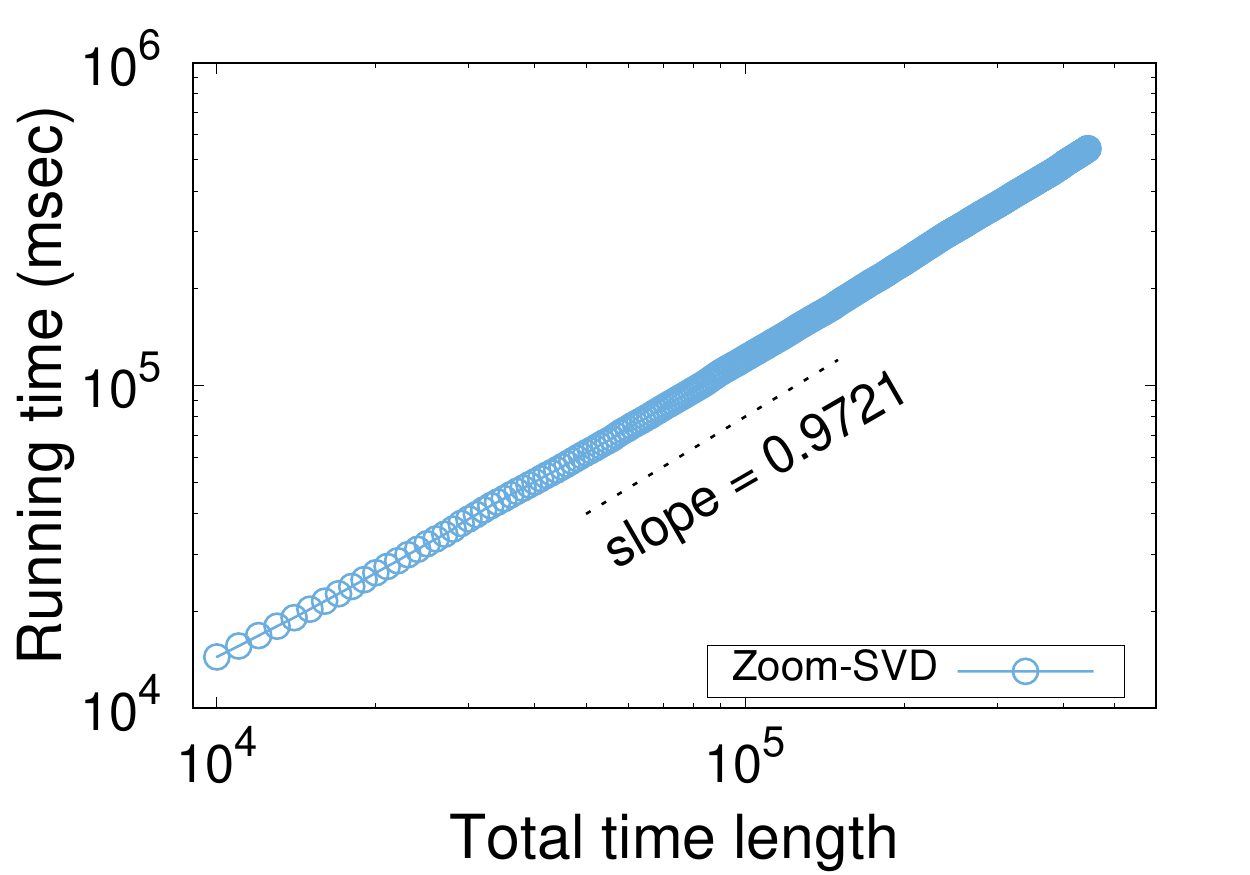}\label{fig:Store_Activity}}
	\subfloat[Running time of \method's storage phase in Gas dataset]{\includegraphics[ width=0.22\textwidth]{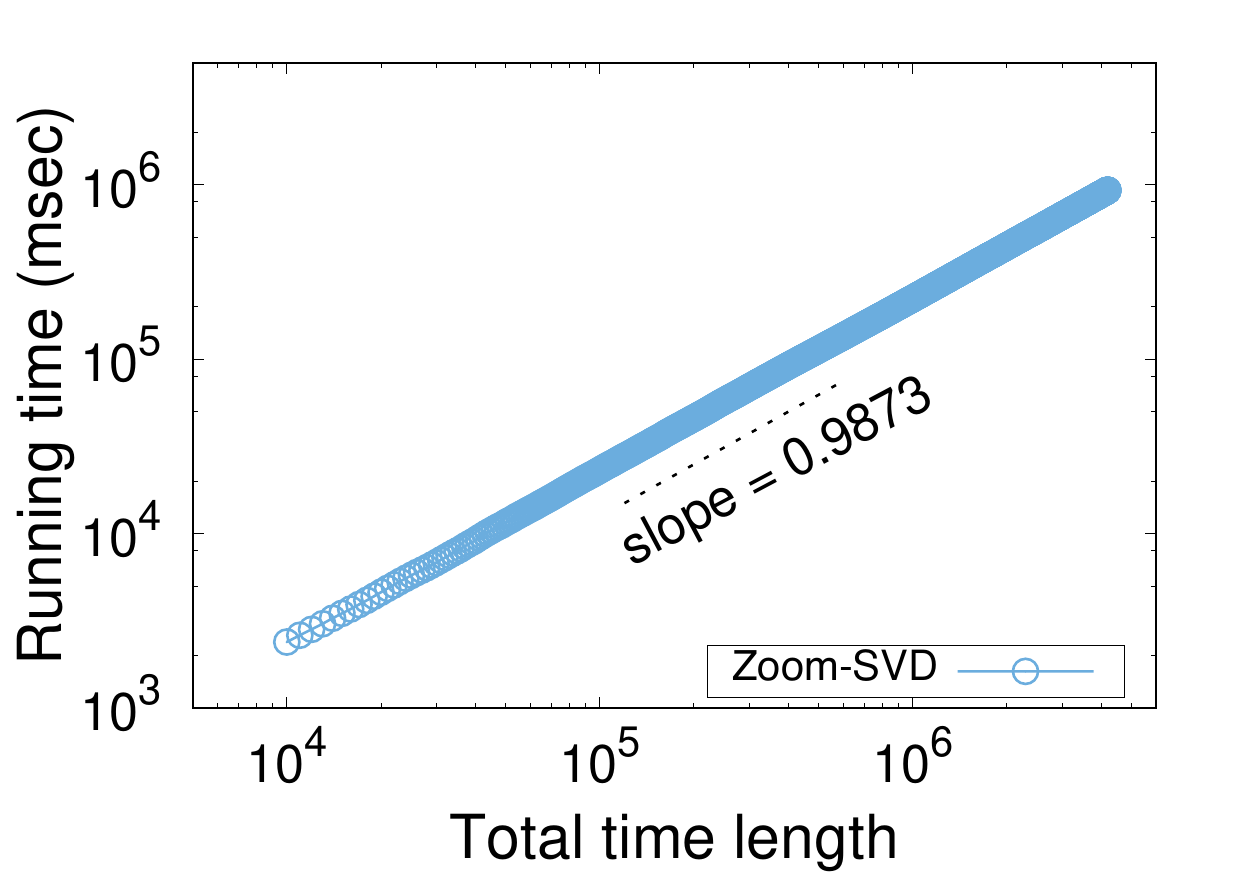}\label{fig:Store_HT}} \\
	
	\caption{
		Running time of \method's storage phase on Activity and Gas datasets. The results show that the time to update all data is linear to the total time length $t$.
		The reason is that \method incrementally reads a new input vector, and computes the SVD of the block matrix in a constant time.
		The pattern on London dataset is similar to the above results.
	}
	\label{fig:StoringTime}
\end{figure}

\begin{figure}[t]
	\centering
	\includegraphics[width=0.28\textwidth]{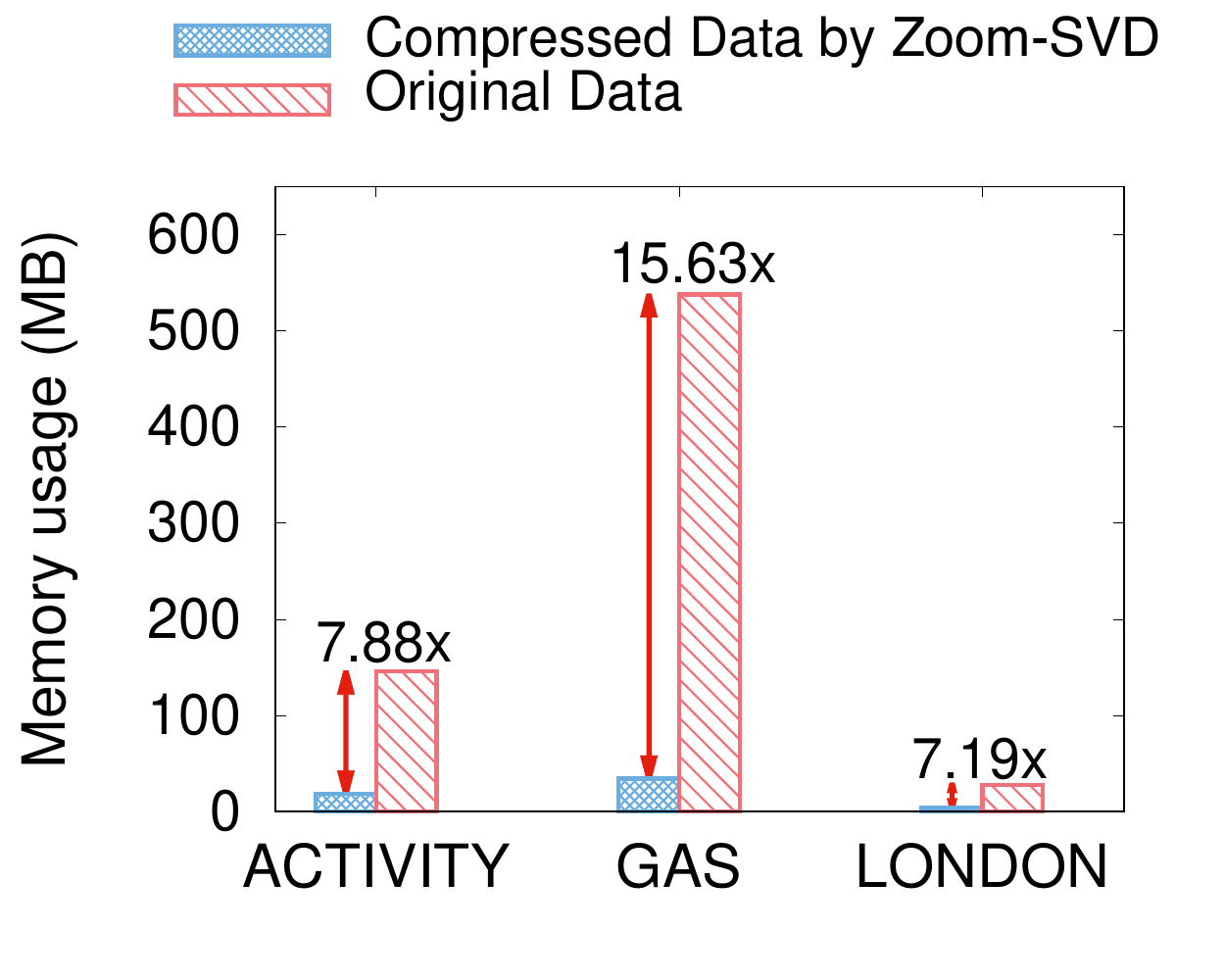}\label{fig:Compression}
	\caption
{Space savings by \method.
\method requires $7.88 \times$, $15.63 \times$, and $7.19 \times$ less space than the original data require for Activity, Gas sensor, and London, respectively.
}
	\label{fig:space_cost}
\end{figure}

\begin{figure*} [t]
	\vspace{-4mm}
	\subfloat{\includegraphics[width=0.8\textwidth]{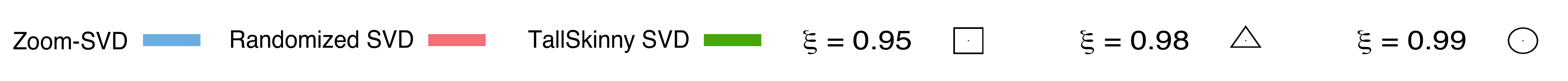}}\\
	\vspace{-4mm}
	\setcounter{subfigure}{0}
	\subfloat[Query time vs Error in Activity]{\includegraphics[ width=0.25\textwidth]{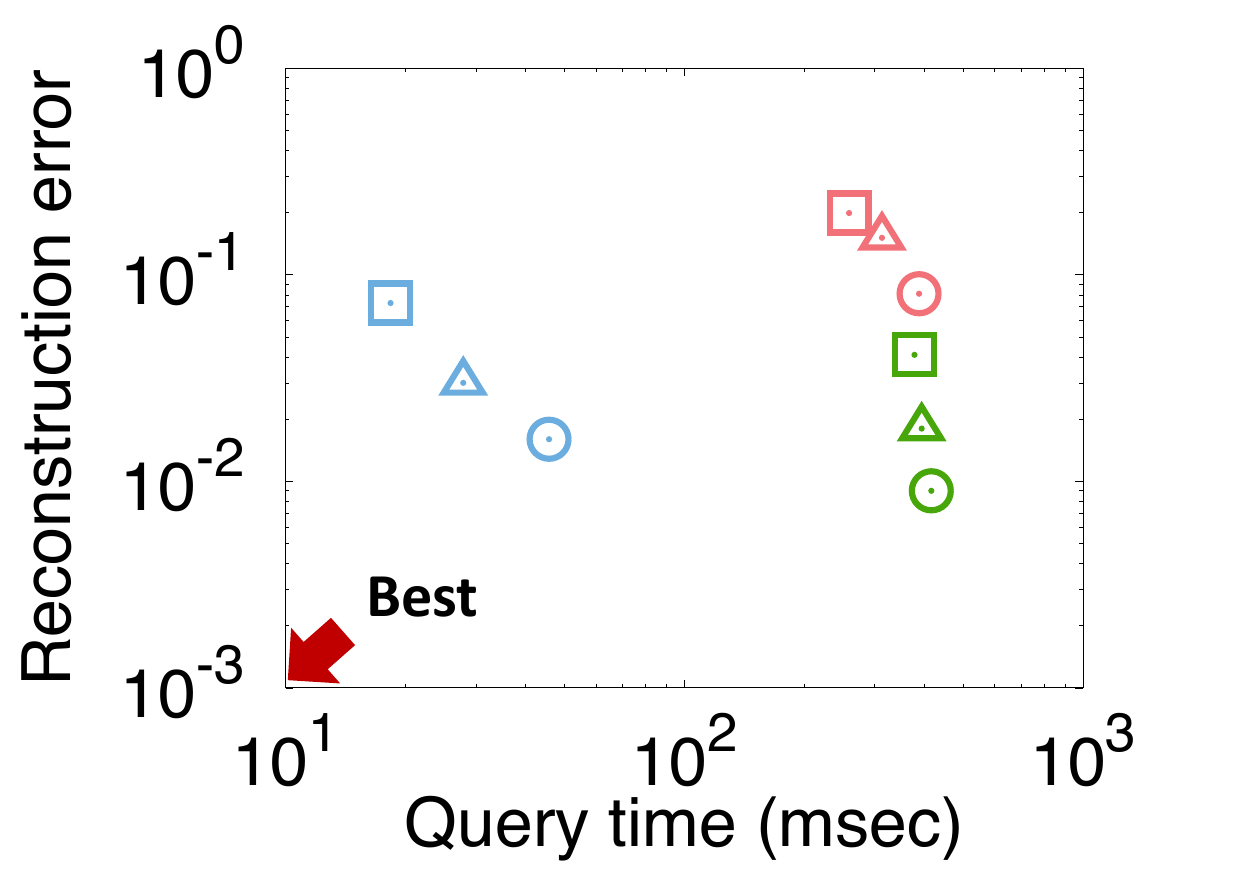}\label{fig:TRADEOFF_AT1}}
	\subfloat[Query time vs Error in Gas]{\includegraphics[ width=0.25\textwidth]{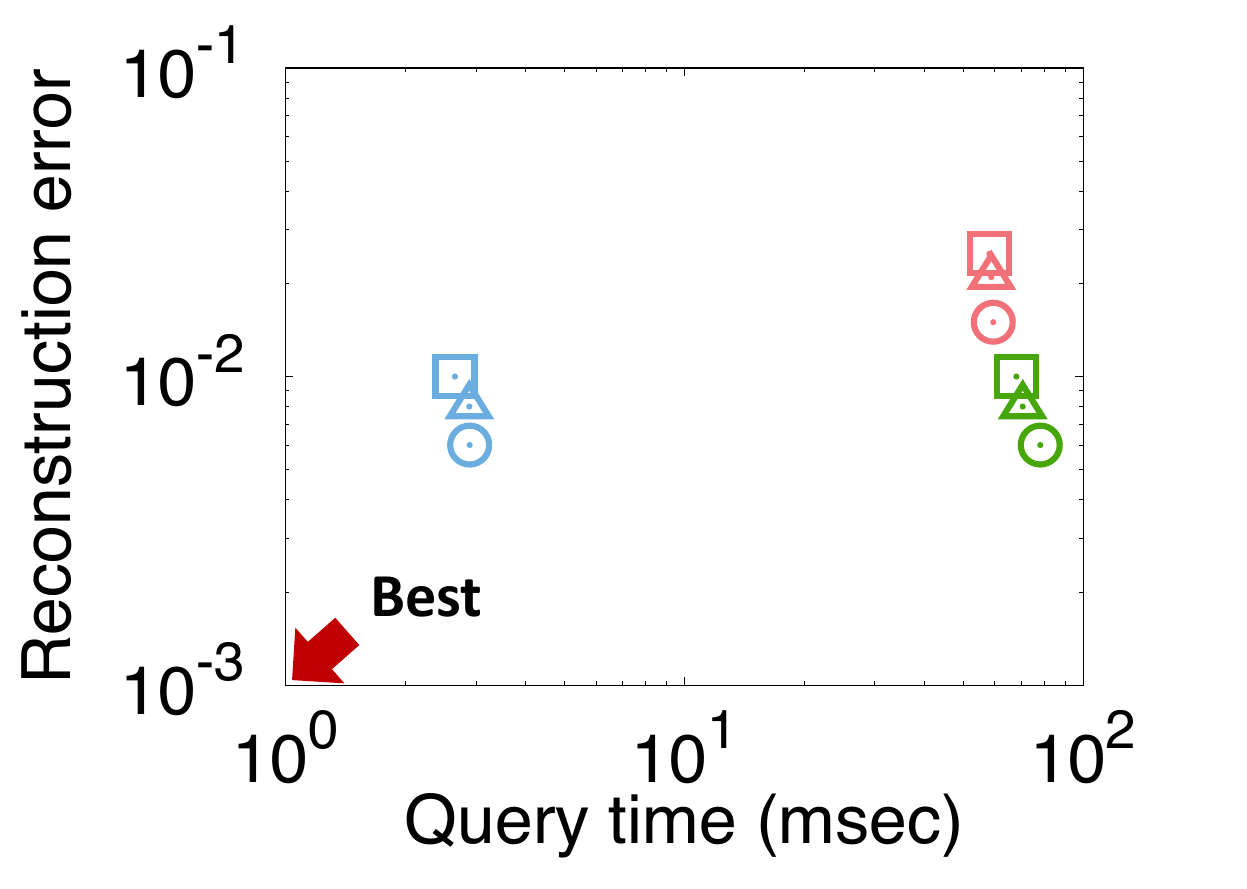}\label{fig:TRADEOFF_AT2}}
	\subfloat[Query time vs Error in London]{\includegraphics[ width=0.25\textwidth]{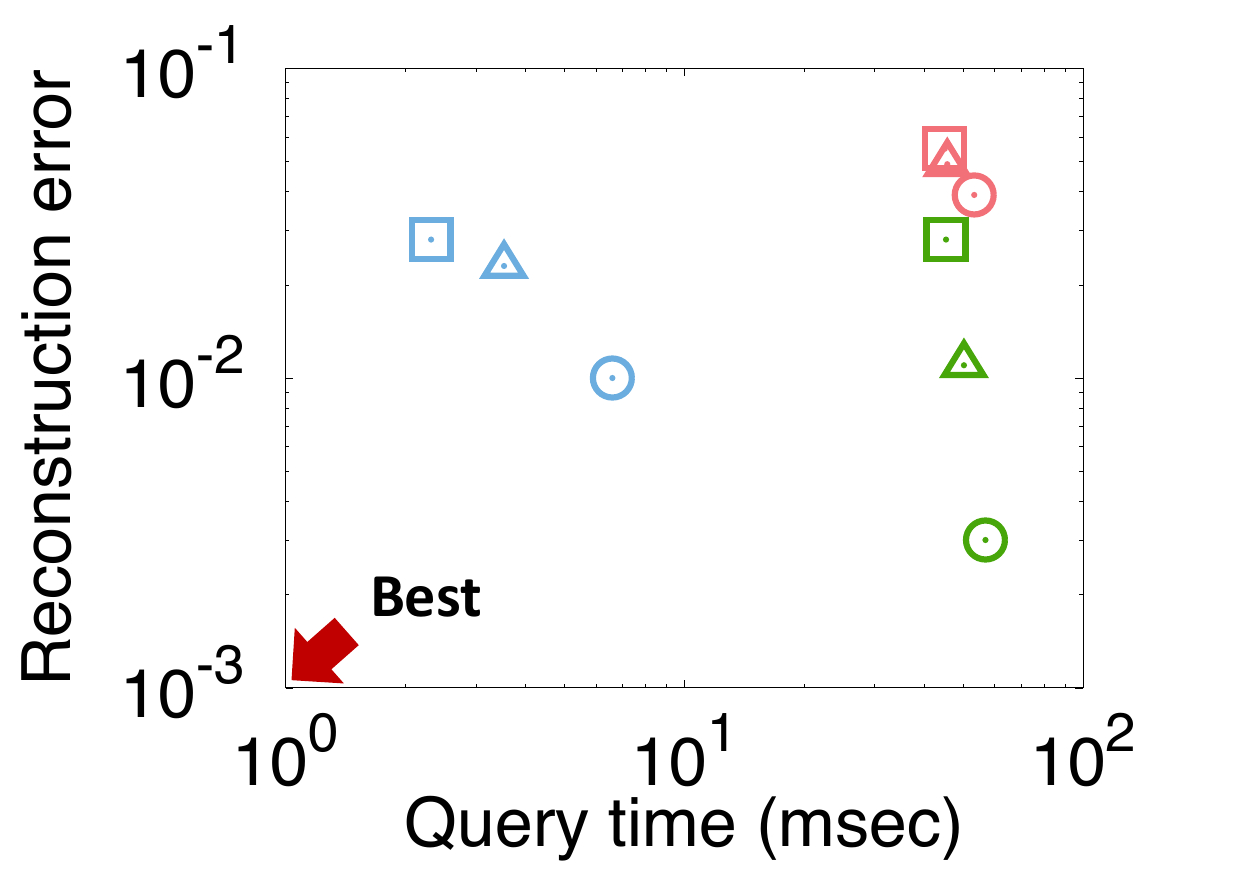}\label{fig:TRADEOFF_AT3}} \\
	\subfloat[Memory usage vs Error in Activity]{\includegraphics[ width=0.25\textwidth]{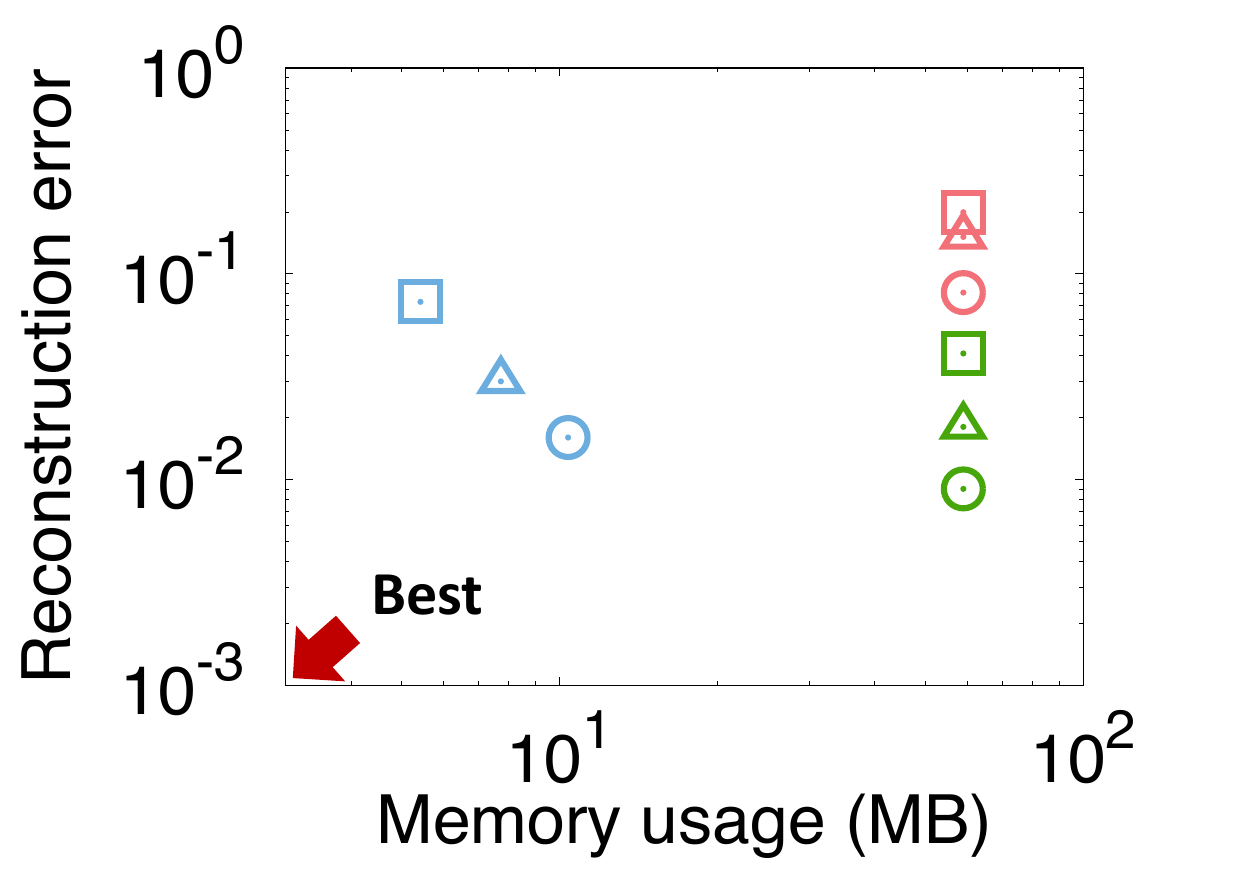}\label{fig:TRADEOFF_AM1}}
	\subfloat[Memory usage vs Error in Gas]{\includegraphics[ width=0.25\textwidth]{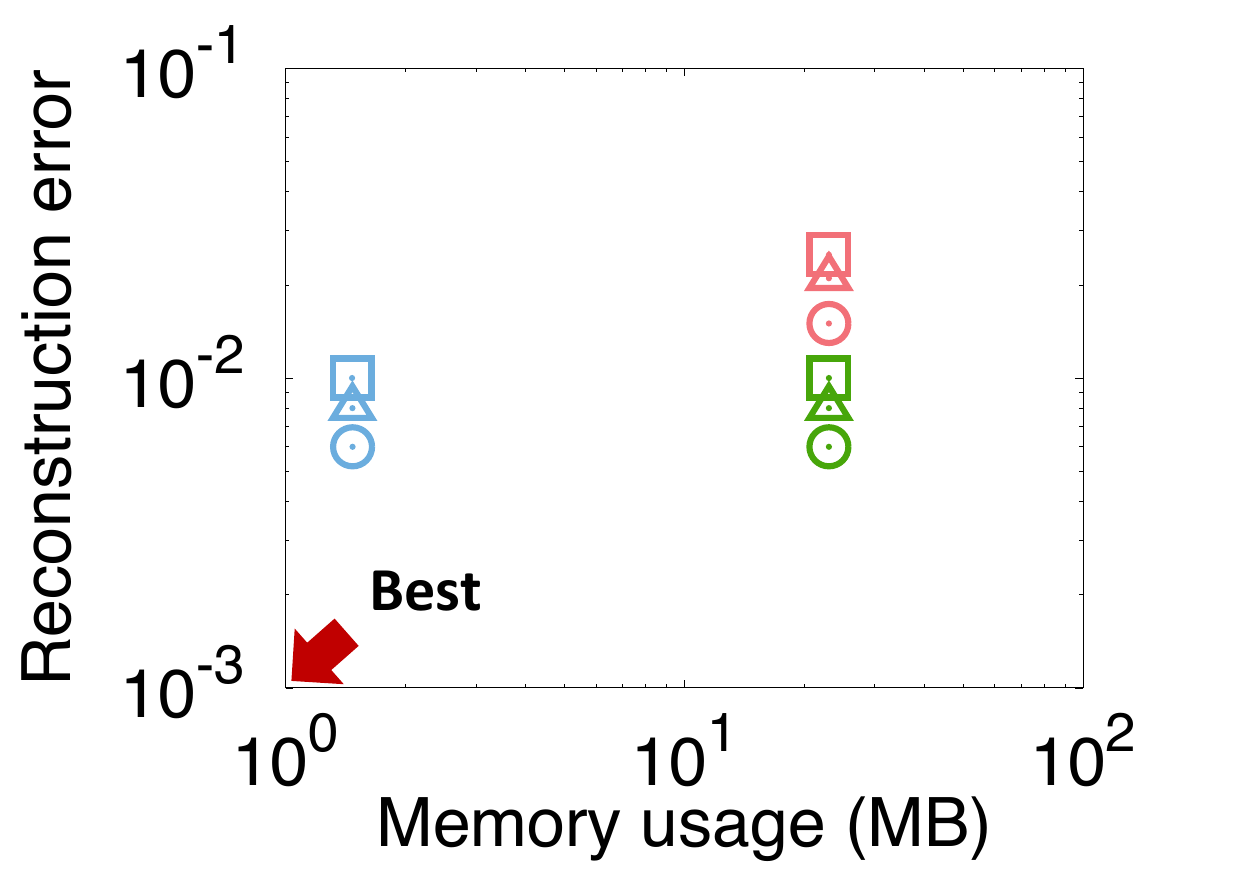}\label{fig:TRADEOFF_AM2}}
	\subfloat[Memory usage vs Error in London]{\includegraphics[ width=0.25\textwidth]{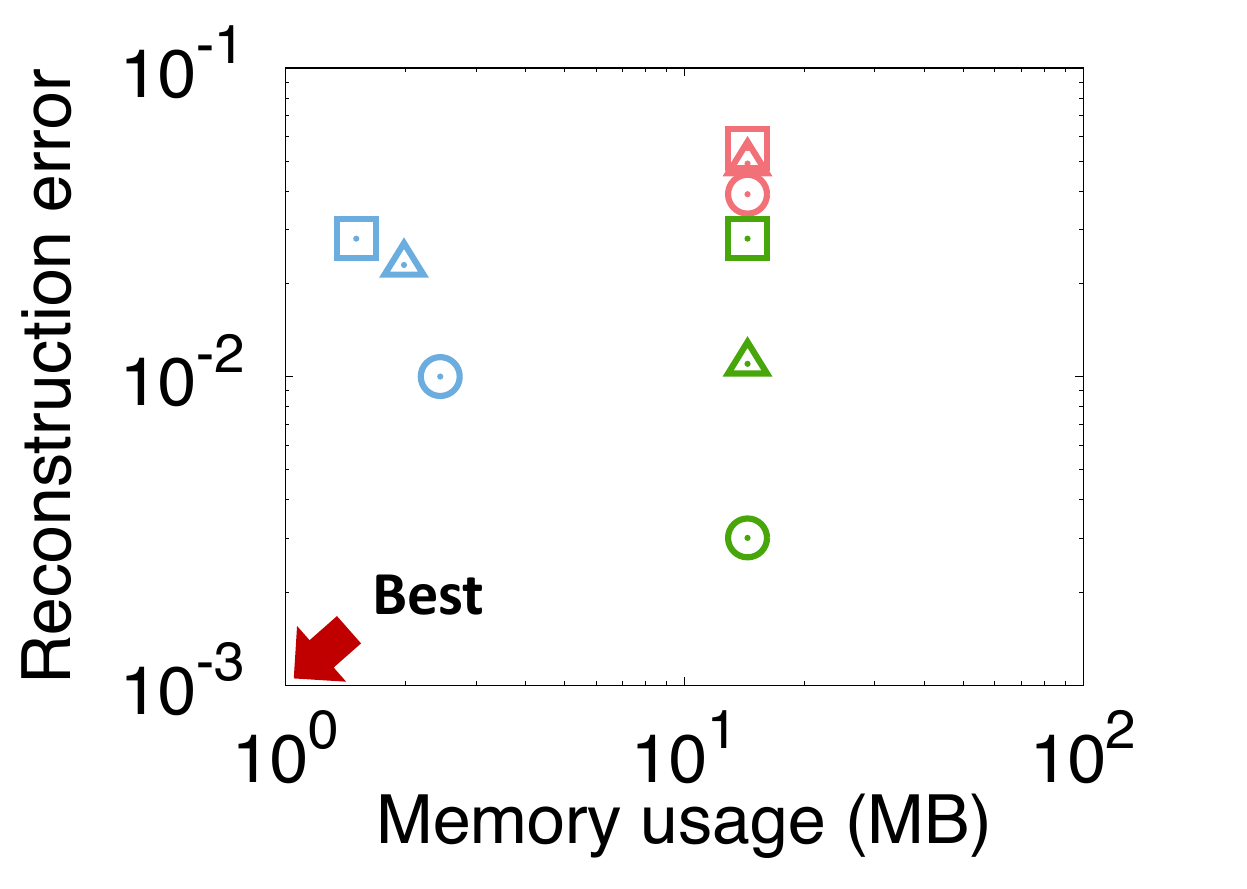}\label{fig:TRADEOFF_AM3}} \\
	\caption{The trade-off between query time, space, and  MSE rate on three real-world datasets. The first column shows the results on the Activity dataset. The second and third columns show the results on the Gas and the London datasets. The colors represent the methods, and the shapes distinguish the threshold $\xi$. The bottom-left region indicates the better performance. On all the real-world datasets, \method has the best performance.
	}
	\label{fig:tradeoff}
\end{figure*}
\hspace{3mm}\textbf{Methods.}
We compare \method with Randomized SVD~\cite{DBLP:journals/siamrev/HalkoMT11}, Tall and Skinny SVD~\cite{DBLP:conf/kdd/ZadehMUYPVSSZ16}, and basic SVD of JBLAS library.
All these methods are implemented using JAVA, and we use JBLAS, an open source JAVA basic linear algebra package, to support matrix operations.

\textbf{Dataset.}
We use multiple time series datasets \cite{DBLP:conf/iswc/ReissS12, fonollosa2015reservoir} described in Table~\ref{tab:Description}.
{Activity dataset contains data with timestamps, and 41 measured values such as heartbeat, and inertial measurement units (IMU) data installed in the hands, chest, and ankle.}
Gas dataset consists of timestamps, and measurement of 16 chemical sensors having 4 different types.
Activity and Gas datasets are obtained from UCI repository~\cite{Dua:2017}.
London dataset consists of timestamps, and attributes related to London air quality such as nitric oxide, temperature, and so on.
		
\textbf{Parameters}. 
For all experiments except the ones in Section~\ref{subsec:param_sense}, we set the block size $b$ to $1000$ in the storage phase.
We evaluate the effects of the block size $b$ in Section~\ref{subsec:param_sense}.
We set the rank $k$ using Equation~\eqref{eqn:threshold} with $\xi = 0.98$ in the storage phase.

\subsection{Time Cost}
\label{subsec:time_cost}
We examine the time costs of the storage and the query phases of our proposed method \method.

\textbf{Storage phase (Algorithm~\ref{alg:storage_phase}).}
We measure the running time of the storage phase for multiple time series data varying the time length.
As shown in Figure~\ref{fig:StoringTime},
the running time of \method's storage phase is linearly proportional to the time length over all datasets.
The reason is that the storage phase processes the time series data row by row, and the time cost of processing a row is constant as discussed in Theorem~\ref{theo:time of store}; thus, the total time cost mainly depends on the time length.

\textbf{Query phase (Algorithm~\ref{alg:opt_query_phase}).}
We evaluate the performance of the query phase in terms of running time.
We compare \method to other SVD based methods discussed in Section~\ref{sec:prelim}, and measure the query time varying the length of the query range.
The starting and the ending points of the query are arbitrarily chosen, and we increase the length of the query range from $10^4$ to $3.2 \times 10^5$.
As shown in Figure~\ref{fig:Runtime}, \method is up to $9.6 \times$ faster than the second best competitor Randomized SVD on Activity dataset.
Also, \method shows up to $15 \times$ and $12 \times$ faster query speed than Randomized SVD on Gas sensor and London datasets, respectively.

%


\subsection{Space Cost}
\label{subsec:space_cost}

We evaluate the compression performance of \method.
In the storage phase, we store the SVD results of multiple time series data with block size $b=10^3$ using low-rank approximation with threshold $\xi = 0.98$  in Equation~\eqref{eqn:threshold}.
We measure the compression ratio for storing the original data and the block compressed data (i.e., the SVD results) by our method.
Figure~\ref{fig:space_cost} shows the compression performance for storing time series data.
\method requires up to $7.88 \times$, $15.63 \times$, and $7.19 \times$ less space than the original data require for Activity, Gas, and London, respectively.
\subsection{Trade-off between Accuracy and Efficiency}
\label{subsec:tradeoff}
We evaluate the trade-off of \method between accuracy, time, and space compared to other methods.
{We measure the time and the space usage by setting the length of time range query to $1.8\times 10^5$.}
The accuracy of each method is measured by the reconstruction error $\frac{\lVert \mathbf{X}_{(t_s:t_e)} - \hat{\mathbf{X}}_{(t_s:t_e)} \rVert^2_{F}}{\lVert \mathbf{X}_{(t_s:t_e)} \rVert^2_{F}}$ where $\hat{\mathbf{X}}_{(t_s:t_e)}$ is the reconstructed data (i.e., $\hat{\mathbf{X}}_{(t_s:t_e)}=\Umat{S,E}\Smat{S,E}\Vmat{S,E}^T$), and $\mathbf{X}_{(t_s:t_e)}$ is the original input data in a query time range $[t_s, t_e]$.
We use the values $0.95$, $0.98$, and $0.99$ for the threshold $\xi$ in the low-rank approximation of each method to investigate the effect of $\xi$ on the trade-off performance.
Figure~\ref{fig:tradeoff} demonstrates the experimental results on the trade-off of SVD based methods including \method.
Figures~\ref{fig:TRADEOFF_AT1},~\ref{fig:TRADEOFF_AT2},~and~\ref{fig:TRADEOFF_AT3} present the trade-off of \method between query time and reconstruction error is better than those of other methods over all the datasets.
Also, \method shows a better trade-off between space and reconstruction error than its competitors as shown in Figures~\ref{fig:TRADEOFF_AM1},~\ref{fig:TRADEOFF_AM2},~and~\ref{fig:TRADEOFF_AM3}.
These results indicate that \method handles time range queries more efficiently with smaller error and space than other SVD based methods.

\subsection{Parameter Sensitivity}
\label{subsec:param_sense}
We examine the effects of the block size $b$ in terms of query time, and space cost.
When the block size $b$ grows from $10$ to $10^5$, we measure the query time and the memory usage in query phase by setting the query time range $t_e-t_s+1$ to $10^3$, $10^4$, and $10^5$ in Algorithm~\ref{alg:opt_query_phase}.
In Figure~\ref{fig:param}, the query time and memory usage show trade-off characteristics. The query time and the space usage for Activity dataset decrease until the block size reaches $10^2\sim10^3$, and then increase afterwards when the block size $b$ exceeds $10^3$.
Note that \method consists of \partialsvd and \stitchedsvd modules in the query phase.
In Figure~\ref{fig:PARAM_TIME}, the query time is dominated by the computation time of the \stitchedsvd when the block size $b$ is relatively small.
{The reason is that the computation time of \partialsvd decreases as the block size $b$ decreases, while that of \stitchedsvd increases linearly with the number of blocks which is inversely proportional to the block size.}
On the other hand, the query time highly depends on the computation time of \partialsvd when the block size is relatively large.
In Figure~\ref{fig:PARAM_MEMORY}, space cost is high when $b$ is small because the number of stored singular vector matrices for blocks increases.
On the other hand, space cost is also high when $b$ is large because the number $k$ of singular values increases as the block size $b$ increases, and then the size of left singular vector matrices $\Umat{i}$ increases.
Therefore, we set the block size $b$ to $10^3$ which is a near-optimum value according to the experimental results.
The sensitivities of \method on the block size $b$ in other datasets show similar patterns.
\begin{figure} [t]
	\subfloat[Query time vs. block size]{\includegraphics[width=0.235\textwidth]{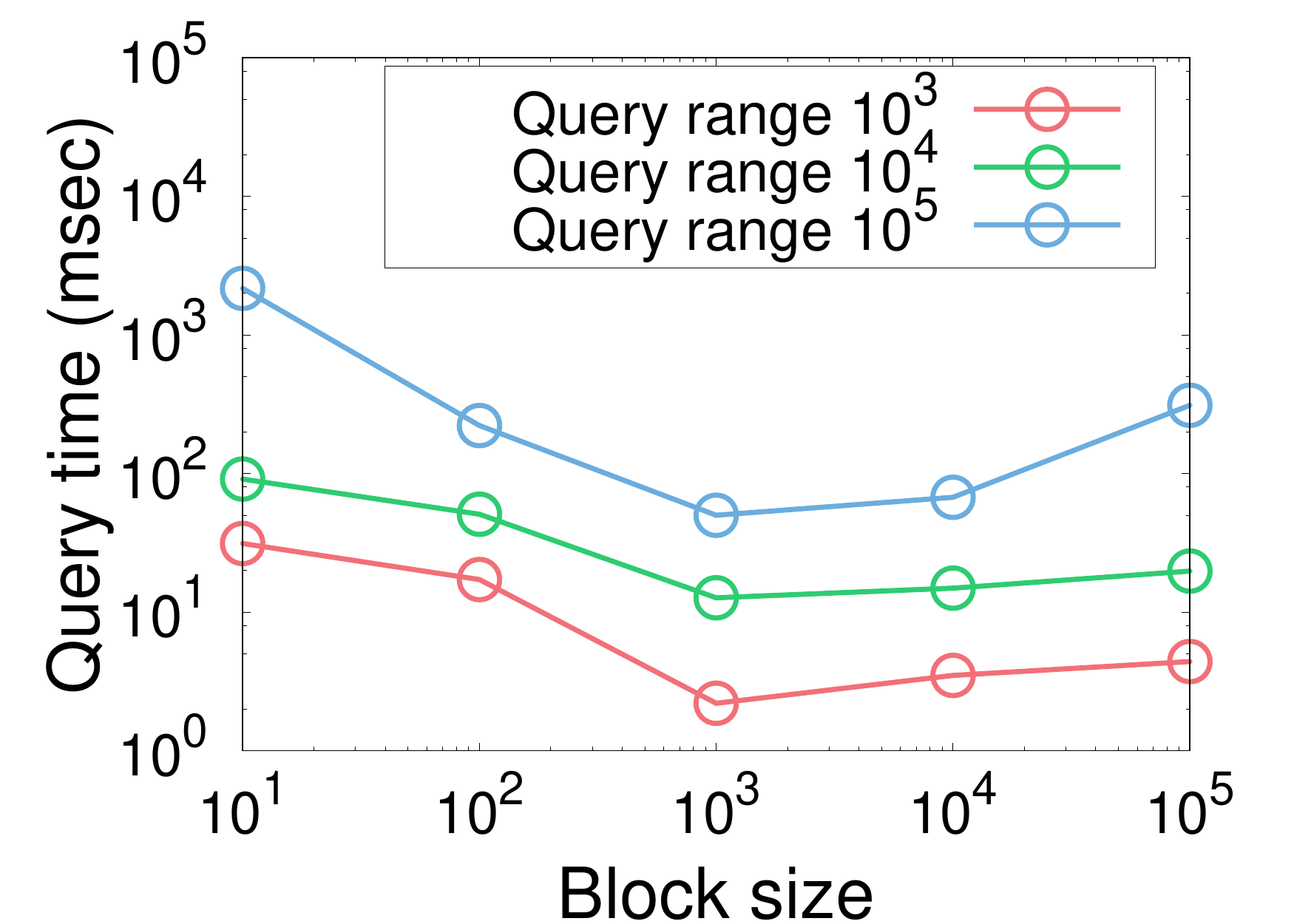}\label{fig:PARAM_TIME}}
	\hspace{-2mm}\subfloat[Space cost vs. block size]{\includegraphics[ width=0.235\textwidth]{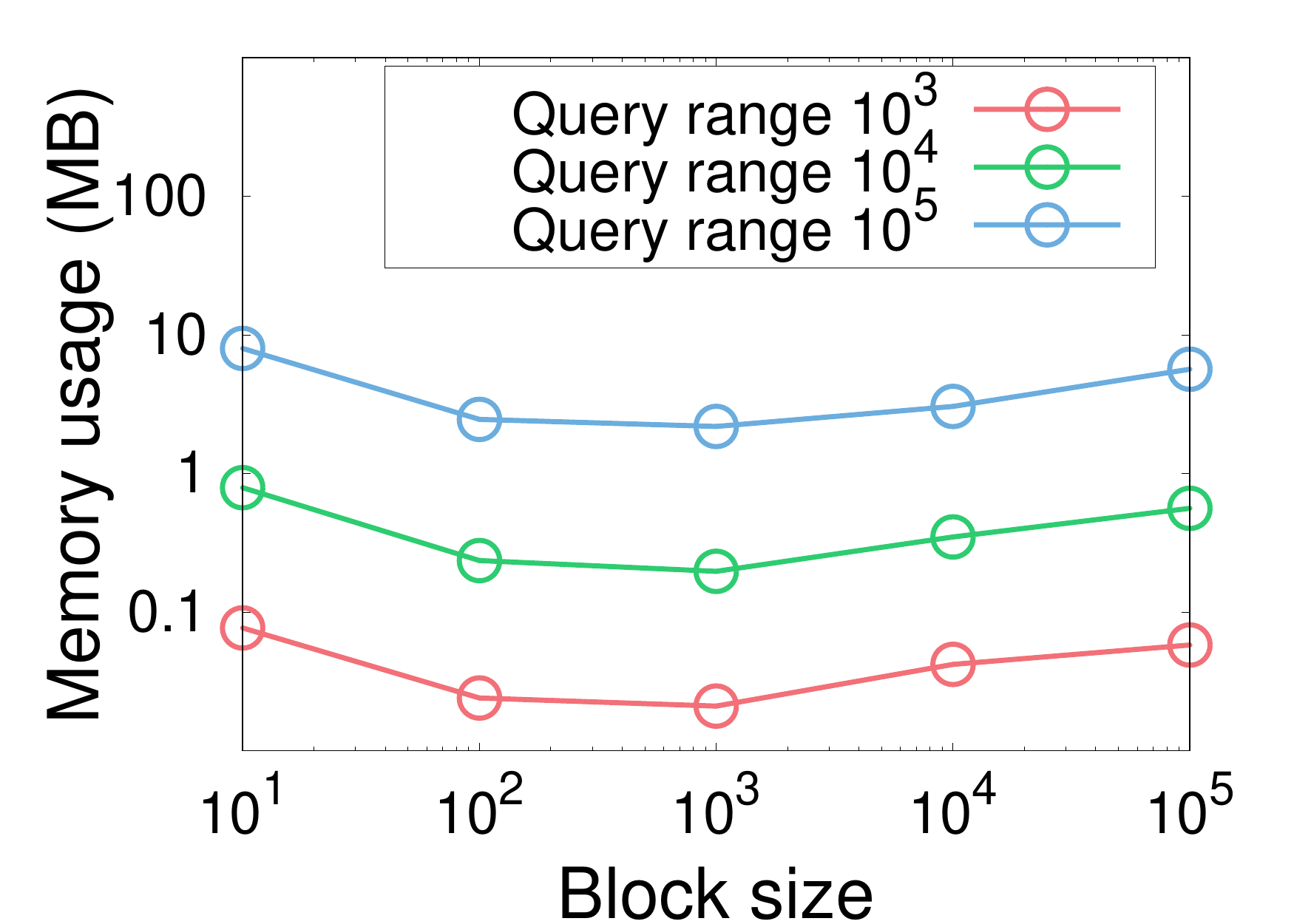}\label{fig:PARAM_MEMORY}}  \\
	\caption{The effect of the block size $b$ in terms of the query time and the memory usage in Activity dataset.
		When the block size $b$ grows from $10$ to $10^5$,
		the query time and the memory usage decrease until the block size is $10^3$, and then increase continuously after block size exceeds $10^3$.
	}
	\label{fig:param}
\end{figure}

\begin{figure*} [t]
	\vspace{-3mm}
	\centering
	\subfloat[First left singular vector]{\includegraphics[ width=0.19\textwidth]{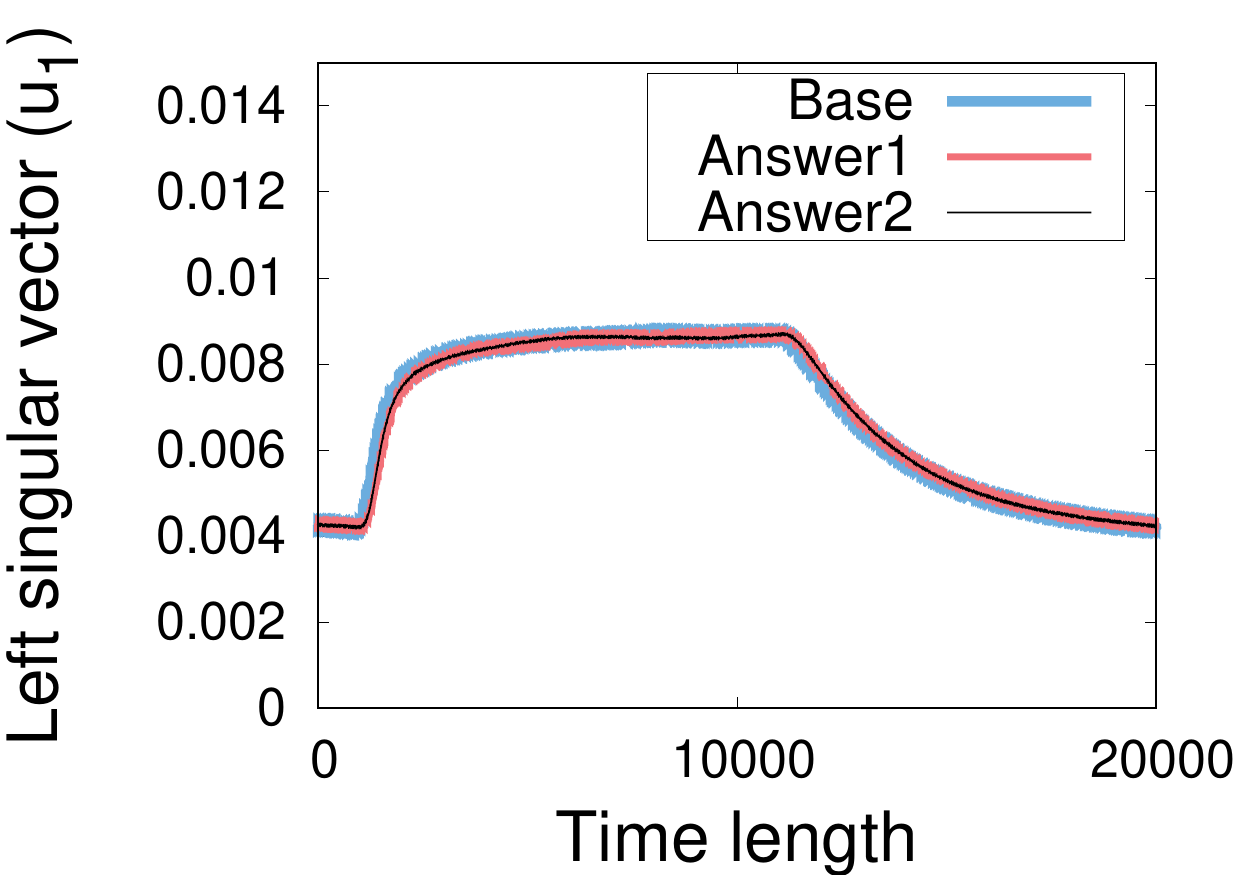}\label{fig:singular_vector}}
	\subfloat[Sensor 1]{\includegraphics[ width=0.19\textwidth]{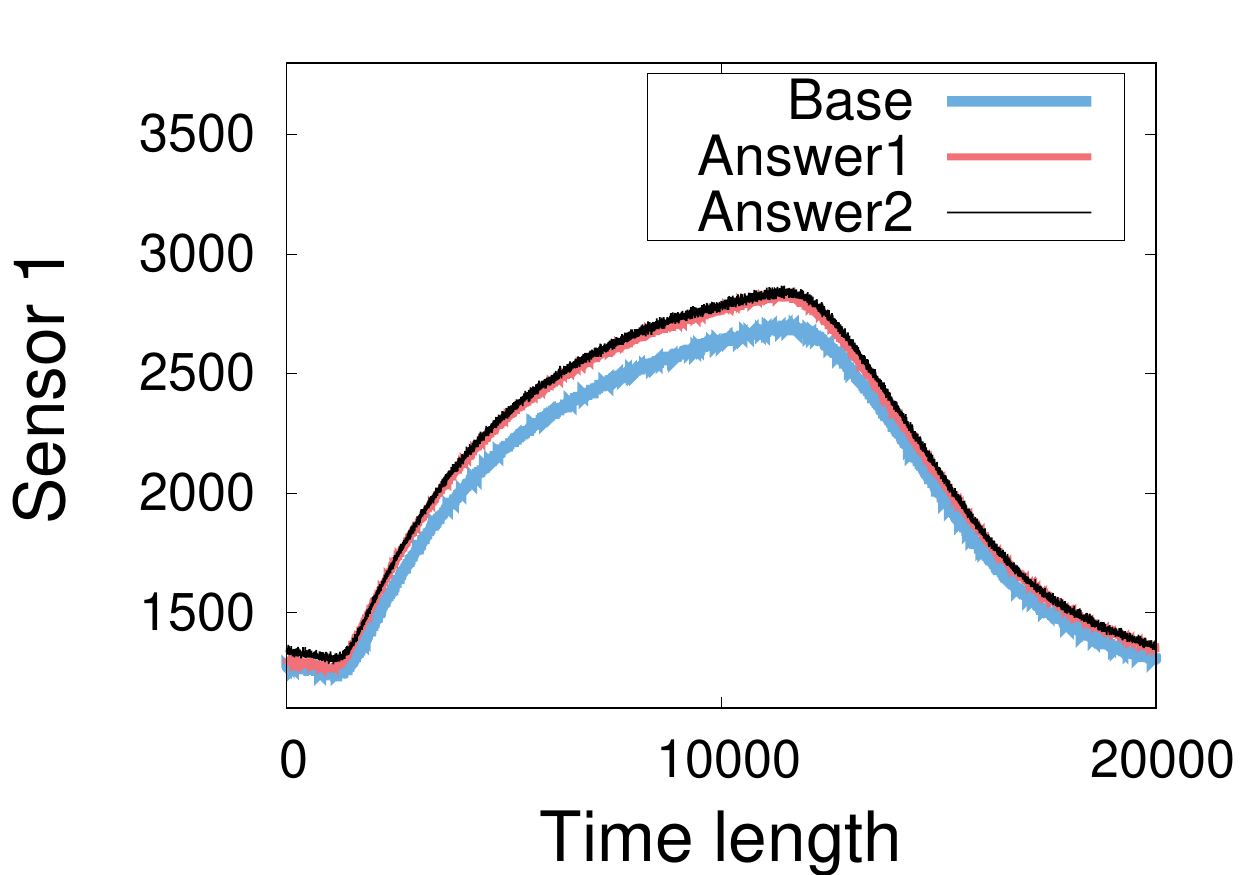}\label{fig:Sensor1}}
	\subfloat[Sensor 2]{\includegraphics[ width=0.19\textwidth]{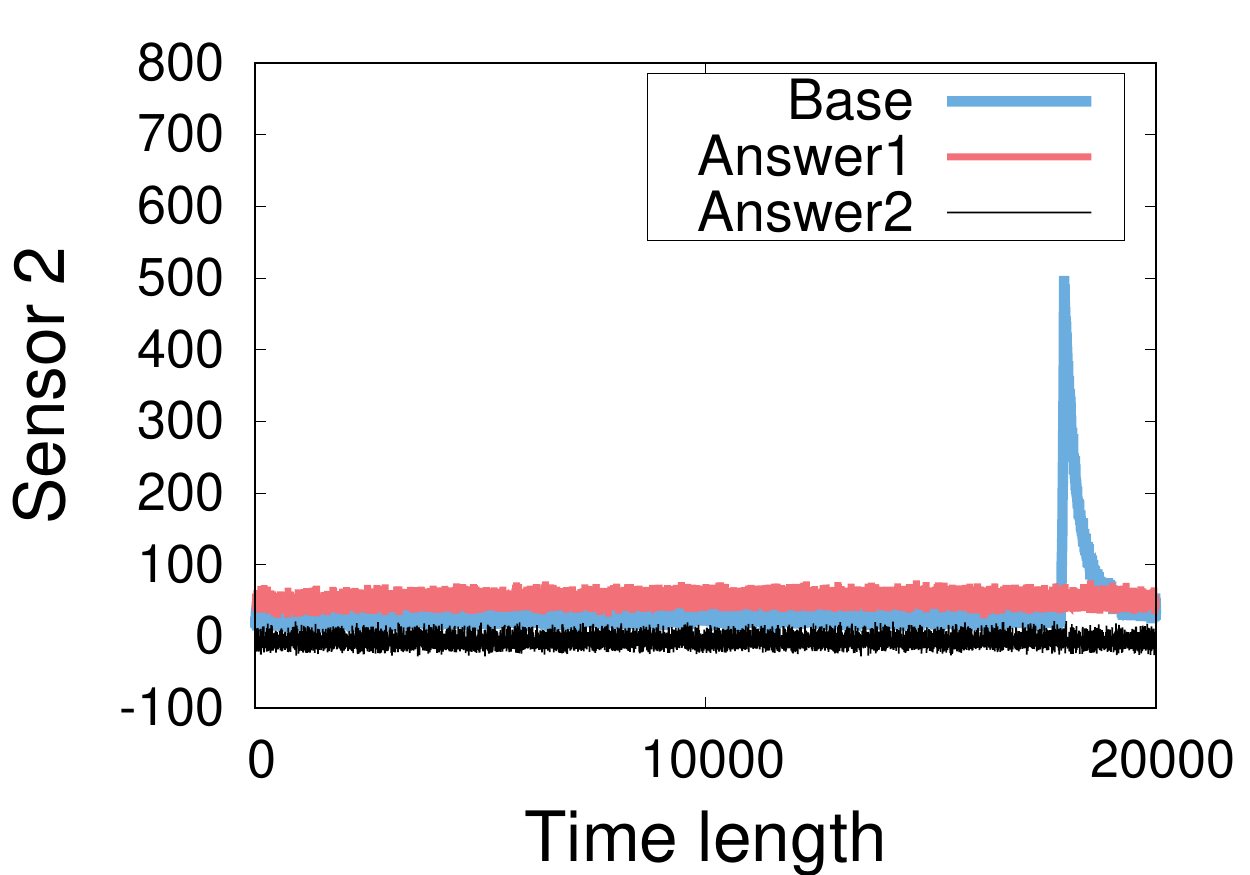}\label{fig:Sensor2}}
	\subfloat[Sensor 3]{\includegraphics[ width=0.19\textwidth]{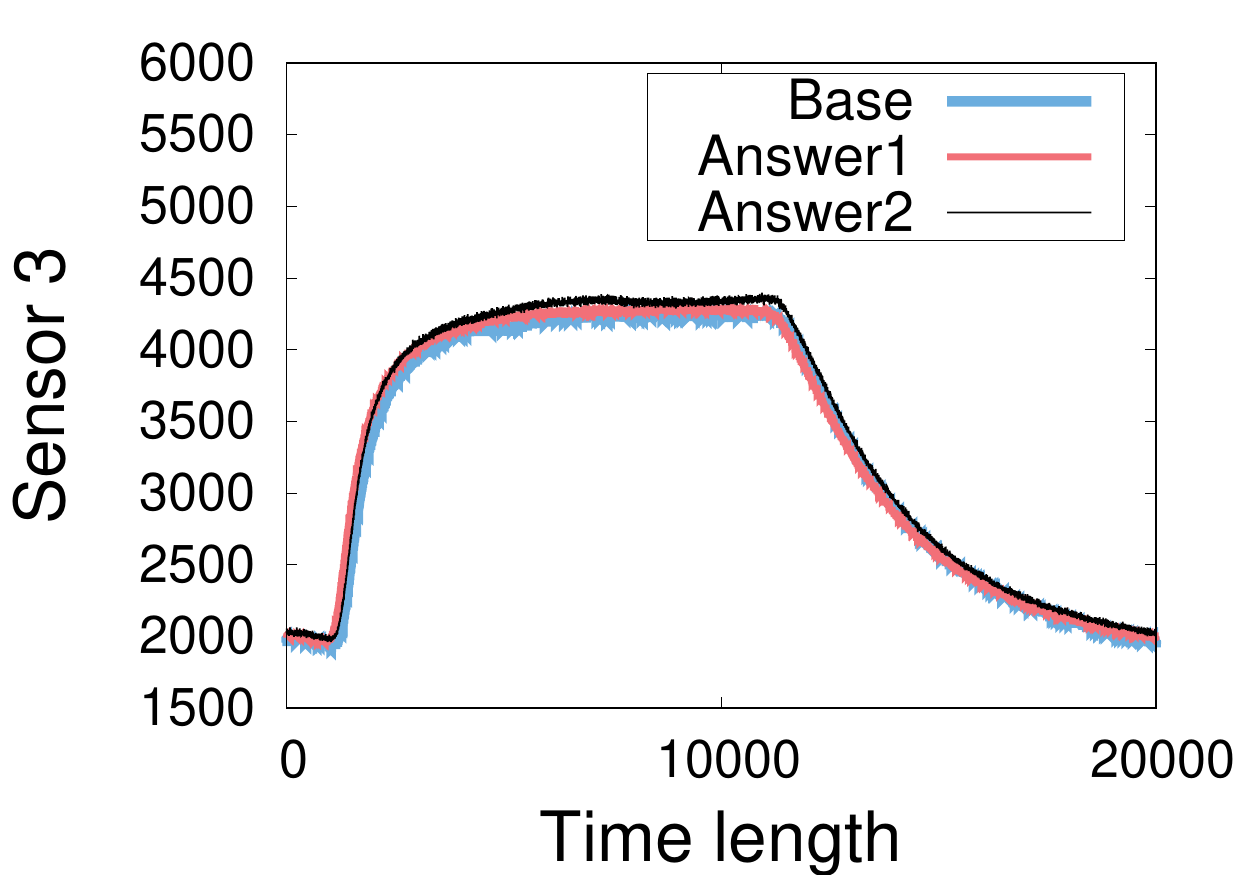}\label{fig:Sensor3}}
	\subfloat[Sensor 4]{\includegraphics[ width=0.19\textwidth]{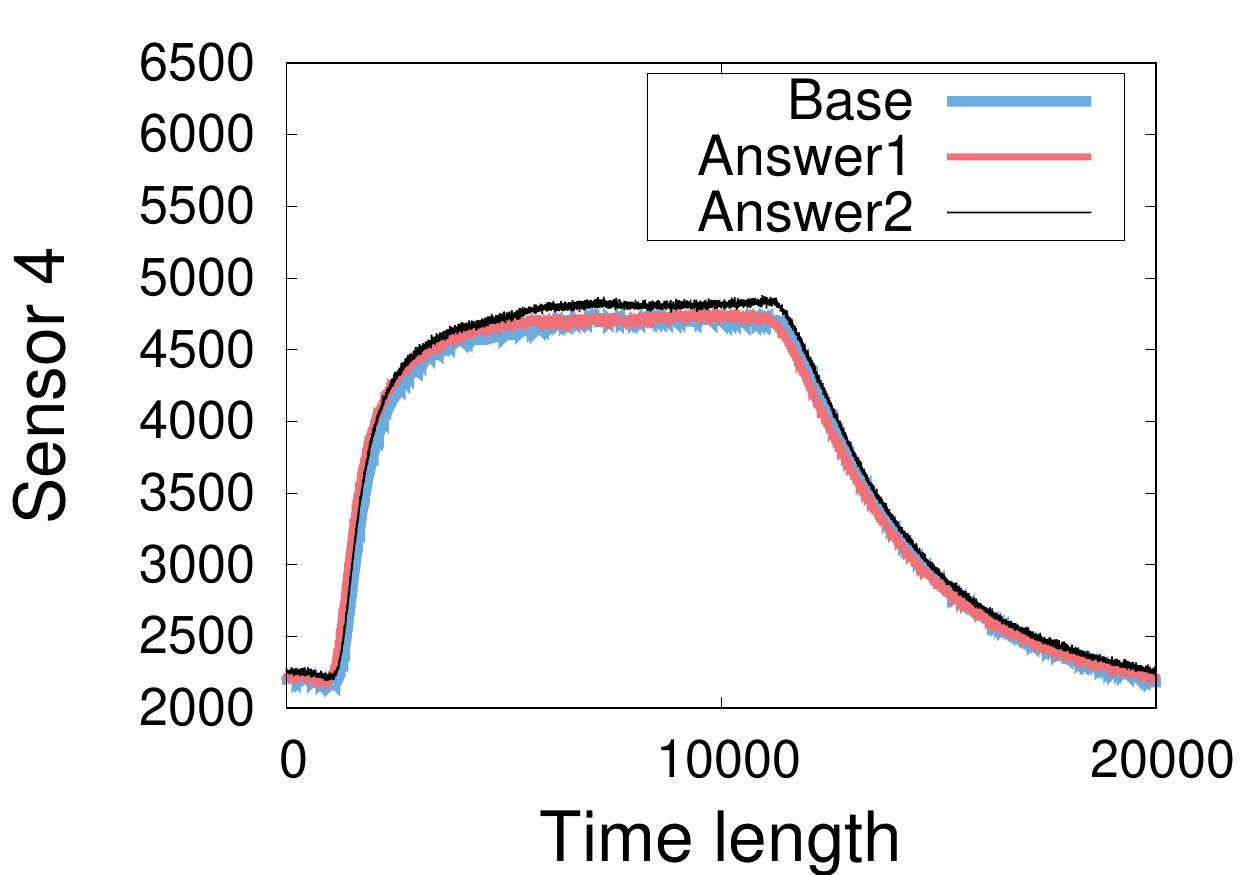}\label{fig:Sensor4}}   \\
	\caption{
		First left singular vectors of the base and two time ranges having the similar patterns to that of the base.
		Answer1 and Answer2 are the past time ranges we found using \method by sliding a window.
		The time range of Answer1 is $[1589294, 1609293]$, and that of Answer2 is $[2736794, 2756793]$.
		In (a), the three left singular vectors have the same pattern.
		In (b, c, d, e), each sensor shows similar measurements for the three ranges in general.
Note that \method discovers a potential anomaly: (c) shows an abnormal spike of Base around $17,960$, which deviates vastly from the measurements of the past ranges.
}
	\label{fig:discovery}
\end{figure*}

\section{Case Study}
\label{sec:casestudy}
In this section, we present the analysis result of Gas sensor dataset using \method.
For a given time range, \method
searches past time ranges for similar patterns to that of the given time range.

\textbf{Data description.}
Gas sensor dataset consists of 16 chemical sensors exposed to gas mixtures which contain ethylene and carbon monoxide (CO) with air.
There are four different types of sensors: TGS2600, TGS2602, TGS2610, and TGS2620 (four sensors per one type).
The sensors are placed in a 60ml measurement chamber.
The electrical conductivities of the sensors are measured at every 0.01 seconds, and the total time length is 12 hours.

\textbf{Finding similar time ranges.}
Our goal is to find past time ranges whose patterns are similar to that of a given time range query which we call as the \lq base\rq.
Suppose a time range $[t_s, t_e]$ is given.
First, we compute SVD of data in the range using \method. 
Next, we continuously compute SVDs of previous time ranges using \method by sliding a window of size $t_e-t_s+1$. 
Note that we use the fixed window size for efficiency.
We then compare $\mathbf{u_1}$ of the base with $\mathbf{\overline{u}_1}$ of previous time ranges, where $\mathbf{u_1}$ is the first column of left singular vector matrix $\mathbf{U}$ of the base, and $\mathbf{\overline{u}_1}$ is the first column of $\mathbf{\overline{U}}$ of a previous time range.
Experimental setting is as follows:
\begin{itemize}
	\item  {The base time range is $[4033294, 4053293]$ which is randomly chosen, and thus the size of sliding window (time length) is $20,000$. The sliding period is $500$.}
	\item We compute the cosine similarity $\frac{\mathbf{u_1} \cdot \mathbf{\overline{u}_1}}{\lVert \mathbf{u_1} \rVert \lVert \mathbf{\overline{u}_1} \rVert}$ to compare the patterns, and then find the two most similar time ranges.
\end{itemize}
Figure~\ref{fig:discovery} shows the first singular vectors of the given time range (denoted by 'Base') and two previous time ranges (denoted by 'Answer1' and 'Answer2') with similar patterns we found, and sensor data corresponding to each time range.
In Figure~\ref{fig:singular_vector}, \method successfully finds the similar patterns, and each sensor has the same tendency w.r.t. the three time ranges. 
We also discover a potential anomaly.
In Figure~\ref{fig:Sensor2}, note an abnormal spike of Base at the time around $17,960$, which deviates significantly from the measurements of the two previous time ranges. 

\section{Related Work}
\label{sec:related}
\hide{
The related works fall into two parts: 1) SVD techniques and time series analysis, and 2) incremental methods for SVD.

\textbf{SVD techniques and time series analysis.}
Singular value decomposition (SVD) has been extensively utilized in traditional data mining applications such as dimensionality reduction~\cite{ravi1998dimensionality,DBLP:conf/sigmod/KornJF97}, principle component analysis (PCA)~\cite{jolliffe2002principal,wall2003singular}, data clustering~\cite{simek2004using,osinski2004lingo}, tensor analysis~\cite{JeonPKF15}, graph mining~\cite{KangTS12,tong2006fast}, and recommender systems~\cite{koren2009matrix}.
SVD has {continuously} received much attention from the stream data mining field for analyzing time series data and discovering correlations among multivariate streams~\cite{wall2003singular,spiegel2011pattern,faloutsos1994fast}.
However, the aforementioned applications are based on SVD which requires to keep the whole data, and thus incurs high time and memory complexities.
}

In this section, we discuss related works on incremental time series analysis and incremental SVD.

It is important to analyze on-line time series data efficiently. There are several ideas based on linear modeling, including
Kalman Filters (KF), linear dynamical systems (LDS) and the variants~\cite{jain2004adaptive,li2010parsimonious,li2009dynammo}, time warping~\cite{toyoda2013pattern}, and correlation-based methods~\cite{mueen2010fast,sakurai2005braid,zhu2002statstream,DBLP:conf/sigmod/KeoghCMP01}. Among them,
incremental SVD efficiently tracks the SVD of dynamic time series data where new data rows are incrementally attached to the current data.
Incremental SVD has been used for updating SVD results when new terms or documents are added to a document-term matrix~\cite{deerwester1990indexing}, and has been adopted to build an incremental movie recommender system~\cite{sarwar2002incremental,brand2003fast}.
Brand~\cite{brand2002incremental} developed an incremental SVD method for incomplete data which contain missing values.
Papadimitriou et al.~\cite{DBLP:conf/vldb/PapadimitriouSF05} proposed SPIRIT which incrementally updates hidden factors in multiple time series, and exploited SPIRIT to predict future signals and interpolate missing values in sensor streams.
Ross et al.~\cite{ross2008incremental} proposed an incremental PCA for visual tracking applications.
Papadimitriou et al.~\cite{DBLP:conf/sigmod/PapadimitriouY06} introduced an incremental method to capture optimal recurring patterns indicating the main trends in time series data.

The methods above have limitations in applying to time range query problem. Conventional methods { such as Discrete Fourier Transformation~\cite{mueen2010fast}, data compression~\cite{reeves2009managing,gandhi2009gamps}, and time series clustering~\cite{paparrizos2015k}} require the whole time series data, or additional data to seek hidden patterns for a specific time range, thereby incurring high memory and time complexity.
On the other hand, our proposed \method efficiently serves time range queries with small memory and low time complexity.


\section{Conclusions}
\label{sec:conclusion}
We propose \method, a novel algorithm for
finding key patterns in an arbitrary time range from multiple time series data.
\method efficiently serves time range queries by compressing multiple time series data, and reducing computation costs by carefully stitching compressed SVD results.
Consequently, \method provides fast query time and small memory usage.
We provide theoretical analysis on the time and space complexities of \method.
Experiments show that \method requires up to $15.66 \times$ less space, and runs up to $15 \times$ faster than existing methods.
Also, we show a real-world case study of \method
in searching past time ranges for similar patterns as that of a query time range.
Future research includes extending the method for multiple distributed streams. 

\begin{acks}
This work was supported by the National Research Foundation of Korea(NRF) funded by the Ministry of Science, ICT and Future Planning (NRF-2016M3C4A7952587, PF Class Heterogeneous High Performance Computer Development).
The Institute of Engineering Research at Seoul National University provided research facilities for this work.
The ICT at Seoul National University provides research facilities for this study.
U Kang is the corresponding author.
\end{acks}

\balance
\newpage

\bibliographystyle{ACM-Reference-Format}
\bibliography{mybib}

\end{document}